\numberwithin{equation}{section}
\theoremstyle{plain}
\newtheorem{coro}{Corollary}[section]
\newtheorem{lemma}{Lemma}[section]
\newtheorem{prop}{Proposition}[section]
\newtheorem{thm}{Theorem}[section]
\newcommand{\beq}{\begin{equation}}
\newcommand{\eeq}{\end{equation}}
\newcommand{\beqs}{\begin{eqnarray*}}
\newcommand{\eeqs}{\end{eqnarray*}}
\newcommand{\beqn}{\begin{eqnarray}}
\newcommand{\eeqn}{\end{eqnarray}}
\newcommand{\beqa}{\begin{array}}
\newcommand{\eeqa}{\end{array}}
\def\vol{\text{Vol}}
\let\a\alpha
\let\b\beta
\let\f\varphi
\let\l\lambda
\let\r\rho
\let\th\theta
\def\ii{\sqrt{-1}}
\def\nn{\bar{u}_{(x_{i,\n},\u_{i,\n})}}
\def\VV{dV_{\th}}
\def\VVV{dV_{\th_0}}
\def\qq{\frac{2}{n}}
\def\pp{\frac{\partial}{\partial t}}
\def\ww{2+\frac{2}{n}}
\def\ee{\frac{n+2}{2}}
\def\rr{\frac{n+2}{n}}
\def\tt{\int_M}
\def\ss{\triangle_b}
\def\aa{\tilde{r}}
\def\um{u_{\min}}
\def\umm{u_{\max}}
\def\nn{\nabla_{\th_0}}
\def\dd{\frac{d}{dt}}
\def\LL{_{L^2(M,\th_0)}}
\begin{document}
\title{The Exponential Convergence of the CR Yamabe Flow}

\author{Weimin Sheng}
\address{School of Mathematical Sciences, Zhejiang University, Hangzhou 310027, China.}
\email{weimins@zju.edu.cn}

\author{Kunbo Wang}
\address{School of Mathematical Sciences, Zhejiang University, Hangzhou 310027, China.\\
Current address: College of Sciences, China Jiliang University,  Hangzhou 310018, China.}
\email{21235005@zju.edu.cn}

\thanks{
The authors were supported  by NSF in China No. 11571304 .}

\keywords{CR geometry, CR Yamabe problem, CR Yamabe flow, CR Yamabe invariant}

\subjclass[2010]{32V20, 35K55, 53C21, 53C44}

\begin{abstract}
In this paper, we study the CR Yamabe flow with zero CR Yamabe invariant. We use the CR Poincar\'e inequality and a Gagliardo-Nirenberg type interpolation inequality to show that this flow has long time solution and the solution  converges to a contact form with flat pseudo-Hermitian scalar curvature exponentially.
\end{abstract}

\maketitle

\baselineskip16pt
\parskip3pt

\section{Introduction}
Let $(M^n, g)$ be a smooth, compact Riemannian manifold without boundary, and its dimension $n\geq 3$. The Yamabe problem \cite{Ya} is to find a metric conformal to $g$ such that it has constant scalar curvature.  This problem was solved by Yamabe, Trudinger, Aubin and Schoen in \cite{Ya, Tr, A, S} .
A different approach to the Yamabe problem is the Yamabe flow, which was proposed by Hamilton \cite{H}. Denote $R_g$ the scalar curvature of $g$ and $r_g$ the mean value of $R_g$, i. e.
$$r_g=\frac{\tt R_g dV_g}{\tt dV_g}.$$
Consider the following parabolic equation
\begin {equation} \label{1.2}
\frac{\partial g}{\partial t}=-(R_g-r_g)g.
\end {equation}
Hamilton showed the short time existence for \eqref{1.2} in \cite{H}. Chow \cite{Chow} proved that \eqref{1.2} approaches to a metric of constant scalar curvature provided that the initial metric is locally conformally flat and has positive Ricci curvature. In \cite{Ye}, Ye obtained uniform a priori $C^1$ bounds for the solution of \eqref{1.2} on any conformally flat manifold, and showed that \eqref{1.2} smoothly converge to a metric of constant scalar curvature. Ye also proved that the Yamabe flow \eqref{1.2} exits for all time and converges smoothly to a unique limit of constant scalar curvature provided that the initial metric is scalar negative or scalar flat. By use of the general concentration-compactness result \cite{Struwe}, Schwetlick and Struwe\cite{SS} proved the convergence of the Yamabe flow when $3\leq n\leq 5$  provided that the initial metric has large energy. In \cite{B05},  Brendle proved the convergence of the flow for arbitrary  initial energy.

The CR geometry, which is the abstract model of real hypersurfaces in complex manifolds, has a lot of analogy with the geometry of Riemannian manifolds. Many mathematicians have made outstanding contributions in this field, such as Chern and Moser \cite{CM}, Fefferman\cite{Feff}, Folland \cite{F}, Folland and Stein\cite{FS}, Jerison and Lee \cite{JL-JDG, JL-JAMS, JL89}, Tanaka \cite{T}, and Webster\cite{W}, etc.. Jerison and Lee\cite{JL-JDG} studied a Yamabe type problem on CR manifolds. To distinguish it with the Riemannian Yamabe problem, it is called the CR Yamabe problem.  Suppose that $(M,\th)$ is a compact strongly psedo-convex CR manifold of real dimension $2n+1$ with a given contact form $\th$. The CR Yamabe problem is to find a contact form $\tilde{\th}$ conformal to $\th$ such that its Webster scalar curvature is constant.
If we define a new contact form $\tilde{\th}=u^{\frac{2}{n}}\th$, where $u>0$, and denote $\tilde{R}$  ($R$ resp. ) the pseudo-Hermitian Webster scalar curvature with respect to the contact form $\tilde{\th}$ ($\th$, resp.), then the CR Yamabe problem is reduced to solve the following CR Yamabe equation
\begin{equation}\label{CRYE}
-(2+\frac{2}{n})\triangle_b u+Ru=\tilde{R}u^{1+\frac{2}{n}},
\end{equation}
where $\triangle_b$ is the sub-Lapacian of $M$.
The CR Yamabe invariant is defined as
$$\l(M,\th)=\inf\{\frac{\int_M [(2+\frac{2}{n})\|\nabla_{\th} u\|^2+Ru^2]dV_{\th}}{(\int_M u^{2+\frac{2}{n}}dV_{\th})^{\frac{n}{n+1}}}:u>0,u\in S^2_1(M)\}.$$
Here $dV_{\th}$ is the volume form with respect to the contact form $\th$, $S^2_1(M)$ is the Folland-Stein space, which is the completion of $C^1(M)$ with respect to the norm
$$||u||_{S^2_1(M)}=(\int_M(|\nabla_{\th} u|^2_{\th}+|u|^2)dV_{\th})^{\frac{1}{2}}.$$
Jerison and Lee \cite{JL-JDG} solved the CR Yamabe problem when $n\geq 2$ and $M$ is not locally CR equivalent to the sphere. The remaining cases were solved by Gamara\cite{Ga}, and Gamara, Yacoub \cite{GY}.\\

Since $\l(M,\th)$ is determined by the CR structure, which is independent of the choice of $\th$, we denote it by $\l(M)$ from now on.  It is natural to ask if we can solve the CR Yamabe problem by a parabolic argument. Namely, as an analogue to the Yamabe flow on a Riemannian manifold, one can construct the CR Yamabe flow as follows:
\begin{equation}\label{CRYF}
\frac{\partial}{\partial t}\tilde{\th}(t)=-(\tilde{R}-\tilde{r})\tilde{\th}(t).
\end{equation}
Here $\tilde{r}$ is the average value of the pseudohermitian scalar curvature $\tilde{R}$, defined by
$$\tilde{r}=\frac{\int_M \tilde{R}dV_{\tilde{\th}}}{\int_M dV_{\tilde{\th}}}.$$
The CR Yamabe flow was firstly studied by Chang and Cheng \cite{CC}. They proved the short time existence in all dimensions and obtained a Harnack type inequality in dimension three. Zhang \cite{Z}  proved the long time existence and convergence for the case $\l(M)<0$. For the case $\l(M)>0$, Ho \cite{Ho12} proved the long
time existence for all dimensions, and the convergence when $M$ is the sphere. Ho and the authors \cite{HSW} proved the convergence when $n=1$ recently.

For a given contact form $\th_0$ on $M$, we say $\tilde{\th}$ is conformal to $\th_0$ if there is a positive function $f$ such that
$$\tilde{\th}=f\th_0.$$
Let $[\th_0]$ be the conformal class of a given contact form $\th_0$ on $M$. If we assume that $\l(M)=0$, then we can find a contact form $\th\in [\th_0]$ with flat pseudohermitian scalar curvature. Without loss of generalization, we may assume it is $\th_0$ itself. We consider the following CR Yamabe flow:
\begin{equation}\label{CRYF1}
\left\{
\begin{array}{ll}
\frac{\partial}{\partial t}\tilde{\th}(t)&=-(\tilde{R}-\tilde{r})\tilde{\th}(t),\\[0.2cm]
\tilde{\th}(t)&=u^{\frac{2}{n}}(t)\th_0,\\
\tilde{\th}(t)|_{t=0}&=\th.
\end{array}
\right.
\end{equation}
Here $\th$ may be $\th_0$ or some other fixed contact form from the conformal class $[\th_0]$, i.e.
$$\th=u(\cdot,0)^{\qq}\th_0.$$
In this paper, we follow the idea of Ye \cite{Ye}(Page 45-47) to prove the following main theorem:
\begin {thm}\label {main}
Let $(M,\th_0)$ be a smooth, strictly pseudo-convex $2n+1$ dimensional compact CR manifold. Suppose $\l(M)=0$, then the CR Yamabe flow \eqref {CRYF1} exists for all time, and converges to a contact form with flat pseudo-Hermitian scalar curvature exponentially.
\end {thm}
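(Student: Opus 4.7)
The plan is to adapt Ye's strategy \cite{Ye} for the Riemannian Yamabe flow to the CR setting, with the sub-Laplacian $\triangle_b$ replacing the Laplace--Beltrami operator and the CR Poincar\'e inequality replacing the usual Poincar\'e inequality. First I would recast \eqref{CRYF1} as a scalar evolution equation for $u$: since $R\equiv 0$, the CR Yamabe equation \eqref{CRYE} yields $\tilde R\,u=-(2+\frac{2}{n})u^{-\frac{2}{n}}\triangle_b u$, so the flow becomes
$$
\frac{\partial u}{\partial t}=(n+1)\,u^{-\frac{2}{n}}\triangle_b u+\frac{n}{2}\,\tilde r\,u.
$$
A direct computation shows that the volume $V=\int_M u^{2+\frac{2}{n}}\,dV_{\theta_0}$ is conserved, while the Dirichlet energy $E(t)=(2+\frac{2}{n})\int_M|\nabla_{\theta_0}u|^2\,dV_{\theta_0}$ equals $\tilde r(t)V$ and is non-increasing along the flow. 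Combined with $\lambda(M)=0$, this forces $\tilde r(t)\ge 0$, so $\tilde r(t)\searrow\tilde r_\infty\ge 0$.

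The second step is to establish uniform bounds on $u$, which buys long time existence. Using the CR Poincar\'e inequality together with the Gagliardo--Nirenberg type interpolation advertised in the abstract, I would run a Moser--De Giorgi iteration on the $L^p$ norms of $u$ to control $\|u(\cdot,t)\|_{L^p}$ uniformly in $t$ for every finite $p$, and bootstrap to a uniform upper bound $\|u\|_{L^\infty}\le C$. A maximum-principle argument for the scalar equation above, combined with the conservation $\int u^{2+\frac{2}{n}}\,dV_{\theta_0}=V$, then produces a uniform positive lower bound $u\ge c_0>0$. Once $u$ is pinched between two positive constants the flow is uniformly sub-parabolic, and Schauder-type estimates for $\triangle_b$ yield bounds on all higher derivatives and hence smooth existence on $[0,\infty)$.

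Next I obtain subsequential convergence. The standard identity $\frac{dE}{dt}=-c_n\int_M(\tilde R-\tilde r)^2 dV_{\tilde\theta}$ together with $\tilde r(t)\ge 0$ yields $\int_0^\infty\|\tilde R-\tilde r\|_{L^2(dV_{\tilde\theta})}^2\,dt<\infty$, so along some sequence $t_k\to\infty$ the forms $\tilde\theta(t_k)$ converge in $C^\infty$ to a limit $\tilde\theta_\infty=u_\infty^{\frac{2}{n}}\theta_0$ with constant Webster scalar curvature $\tilde r_\infty$. Since $\lambda(M)=0$, standard uniqueness of constant-scalar-curvature representatives in the non-positive Yamabe class forces $\tilde\theta_\infty$ to be a constant multiple of $\theta_0$, hence pseudo-Hermitian flat, so $\tilde r_\infty=0$. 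To upgrade to the exponential rate I linearize at $\tilde\theta_\infty$: writing $u=u_\infty(1+w)$ with $\int w\,dV_{\tilde\theta_\infty}=0$ to leading order (from the volume constraint), the linearization of $\tilde R$ is proportional to $\triangle_b w$, and the spectral estimate $\int|\triangle_b w|^2\ge\mu_1\int|\nabla_{\theta_0}w|^2$ (which follows from the CR Poincar\'e inequality applied spectrally) produces a differential inequality
$$
\frac{dE}{dt}\le -2\delta\,E,\qquad \delta>0,
$$
valid once $u(\cdot,t)$ is $C^\infty$-close to $u_\infty$. This gives $\tilde r(t)\le Ce^{-2\delta t}$, and parabolic bootstrapping upgrades this $L^2$ decay to $C^\infty$ exponential convergence of $u(\cdot,t)$ to $u_\infty$.

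The main obstacle I expect is the second paragraph: obtaining uniform positive upper and lower bounds on $u$ in the $\lambda(M)=0$ regime. The sign-of-curvature trick available when $\lambda(M)<0$ (cf.\ \cite{Z}) is no longer at hand, so one must balance the CR Poincar\'e inequality against the Gagliardo--Nirenberg interpolation delicately to close the Moser iteration with constants uniform in $t$, and exploit the conservation $\int u^{2+\frac{2}{n}}\,dV_{\theta_0}=V$ to prevent $u$ from collapsing to zero on large subsets. Making these constants truly independent of $t$ is the technical heart of the argument.
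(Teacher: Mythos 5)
Your overall architecture matches the paper's (both follow Ye \cite{Ye}), but the step you yourself flag as ``the technical heart'' --- uniform two-sided bounds on $u$ --- is left as an unexecuted Moser--De Giorgi iteration, and this is a genuine gap: nothing in your sketch explains how the iteration constants are made independent of $t$, and in the $\lambda(M)=0$ regime there is no sign on $\tilde R$ to feed into such an iteration. The paper closes this step with a much more elementary maximum-principle argument that you miss. Writing the flow as $\partial_t u^{(n+2)/n}=\frac{(n+2)(n+1)}{n}\bigl(\triangle_b u+\frac{n}{2n+2}\tilde r\,u^{(n+2)/n}\bigr)$ and using $\tilde r\ge 0$ (which follows directly from $\lambda(M)=0$, since the Yamabe quotient of every positive $u$ is nonnegative), one gets $\frac{d}{dt}u_{\min}^{(n+2)/n}\ge\frac{n+2}{2}\tilde r\,u_{\min}^{(n+2)/n}\ge 0$, hence the lower bound $u\ge u_{\min}(0)$; the same computation at maximum and minimum points shows that the ratio $(u_{\max}/u_{\min})^{(n+2)/n}$ is non-increasing in $t$, and since volume conservation forces $u_{\min}(t)\le(\vol(M,\theta)/\vol(M,\theta_0))^{n/(2n+2)}$, a uniform upper bound on $u_{\max}$ follows with no iteration at all. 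You should replace the Moser iteration by this argument; as proposed, your proof of long-time existence and of the uniform bounds is incomplete precisely where you say it is.

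Your route to the exponential rate also has a soft spot. You extract a subsequential limit $u_\infty$, then linearize at $u_\infty$ and invoke a spectral gap ``once $u(\cdot,t)$ is $C^\infty$-close to $u_\infty$''; but subsequential convergence only puts $u(t_k)$ near $u_\infty$, and you have not shown that the flow remains in that neighborhood for all later times, which is exactly what the linearized estimate requires (this is the standard subsequential-to-full convergence issue, normally bridged by a Lojasiewicz-type argument). The paper avoids linearization entirely: from the exact identity
$$\frac{1}{n+1}\frac{d}{dt}\int_M|\nabla_{\theta_0}u|^2\,dV_{\theta_0}=-2\int_M(\triangle_b u)^2u^{-2/n}\,dV_{\theta_0}+\frac{n}{n+1}\tilde r\int_M|\nabla_{\theta_0}u|^2\,dV_{\theta_0},$$
the global Poincar\'e-type inequality $\|\nabla_{\theta_0}u\|_{L^2}\le C\|\triangle_b u\|_{L^2}$, and the uniform bounds on $u$, one obtains $\frac{d}{dt}\int_M|\nabla_{\theta_0}u|^2\,dV_{\theta_0}\le(n+1)\bigl(\frac{n}{n+1}\tilde r-2C'\bigr)\int_M|\nabla_{\theta_0}u|^2\,dV_{\theta_0}$, which is eventually $\le -A\int_M|\nabla_{\theta_0}u|^2\,dV_{\theta_0}$ because $\tilde r\to 0$; since $\tilde r$ is proportional to the Dirichlet energy when $R_{\theta_0}=0$, this gives exponential decay of $\tilde r$ and of $\|\nabla_{\theta_0}u\|_{L^2}$ directly, and the Gagliardo--Nirenberg interpolation together with $H^k\hookrightarrow C^0$ upgrades it to uniform exponential convergence of $u$ to a constant. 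I recommend adopting this direct differential inequality in place of the linearization step.
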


The convergence argument depends on a Poincar\'e inequality and  a CR Gagliardo-Nirenberg type inequality.  In section 2, we recall some basic concepts in CR geometry, derive a global version of Poincar\'e inequality on CR manifolds. In section 3, we prove the long time existence and exponential
 convergence of the CR Yamabe flow \eqref {CRYF1}. In the appendix, we prove a Gagliardo-Nirenberg type interpolation inequality in CR geometry.

\section{Preliminaries and Notations}
Let $M$ be an orientable, real, $(2n+1)$-dimensional manifold. A CR structure on $M$ is given by a complex $n$-dimensional
subbundle $T_{1,0}$ of the complexified tangent bundle ${\mathbb C}TM$ of $M$, satisfying $T_{1,0}\cap T_{0,1}=\{0\}$, where
$T_{0,1}=\bar{T}_{1,0}$. We assume the CR structure is formally integrable, that is, $T_{1,0}$ satisfies the Frobenius condition
$[T_{1,0}, T_{1,0}]\subset T_{1,0}$. Set $G=Re(T_{1,0}\oplus T_{0,1})$. Then $G$ is a real $2n$-dimensional sub-bundle of $TM$.
Then $G$ carries a natural complex structure map: $J: G\rightarrow G$ given by $J(V+\bar{V})=\ii(V-\bar{V})$ for $V\in T_{1,0}$.

Let $E\subset T^{\ast}M$ denote the real line bundle $G^{\bot}$. Because we assume $M$ is orientable, and the complex structure $J$ induces an
orientation on $G$, $E$ has a global non-vanishing section. A choice of such a 1-form $\th$ is called a pseudo-Hermitian structure on $M$. Associated with such $\theta$, the real symmetric bilinear form $L_\theta$ on $G$:
\begin{equation*}
L_\theta(V,W)=d\theta(V,JW),~~V,W\in G
\end{equation*}
is called the $Levi-form$ of $\theta$. $L_\theta$ extends by complex linearity to $\mathbb{C}G$, and induces a Hermitian form on $T_{1,0}$, which we write
\begin{equation*}
L_\theta(V,\bar W)=-\ii d\theta(V,\bar W),~~V,W\in T_{1,0}
\end{equation*}
If $\theta$ is replaced by $\tilde\theta=f\theta$, $L_\theta$ changes conformally by $L_{\tilde\theta}=fL_\theta$. We assume that $M$ is {\it{strictly pseudo-convex}}, that is, $L_\theta$ is positive definite for a suitable $\theta$. In this case, $\theta$ defines a contact structure on $M$, and we call $\theta$ a contact form. Then we define the volume form on $M$ as $dV_{\theta}=\theta\wedge (d\theta)^n$.

We can choose a unique $T$ called the characteristic direction such that $\theta(T)=1$, $d\theta(T, \cdot)=0$, and $TM=G\oplus \mathbb{R}T$. Then we can define a co-frame $\{\theta, \theta^1, \theta^2, \cdots, \theta^n\}$ satisfying $\theta^\alpha(T)=0$, which is called admissible coframe. Its dual frame $\{T, Z_1, Z_2, \cdots, Z_n\}$ is called admissible frame. In this co-frame, we have $d\theta=\ii h_{\alpha\bar\beta}\theta^\alpha\wedge\theta^{\bar\beta}$, $h_{\alpha\bar\beta}$ is a Hermitian matrix. $h_{\a\bar{\b}}$ and $h^{\a\bar{\b}}$ are used to lower and raise the indices.

The sub-Laplacian operator $\triangle_b$ is defined by
$$\int_M (\triangle_b u)fdV_{\th}=-\int_M\langle du,df\rangle_{\th}dV_{\th},$$
for all smooth function $f$. Here $<,>_{\th}$ is the inner product induced by $L_{\th}$. We denote $|\nabla_{\th}u|^2=\langle du,du\rangle_{\th}$. Tanaka \cite{T} and Webster \cite{W} showed there is a natural connection in the bundle $T_{1,0}$ adapted to a pseudo-Hermitian structure, which is called the Tanaka-Webster connection. To define this connection, we choose an admissible co-frame $\{\th^{\a}\}$ and dual frame $\{Z_{\a}\}$ for $T_{1,0}$. Then there are uniquely determined 1-forms $\omega_{\a\bar{\b}}$, $\tau_{\a}$ on $M$, satisfying
\begin{eqnarray}
d\theta^\alpha &=& \omega^\alpha_\beta\wedge\theta^\beta+\theta\wedge\tau^\alpha,\\
dh_{\alpha\bar\beta} &=& h_{\alpha\bar\gamma}\omega_{\bar\beta}^{\bar\gamma}+\omega_{\alpha}^{\gamma}h_{\gamma\bar\b},\\
\tau_\alpha\wedge\theta^\alpha &=& 0.
\end{eqnarray}
From the third equation, we can find $A_{\alpha\gamma}$, such that
$$\tau_\alpha=A_{\alpha\gamma}\theta^\gamma$$
and $A_{\alpha\gamma}=A_{\gamma\alpha}$. Here $A_{\alpha\gamma}$ is called the pseudohermitian torsion.
With this connection, the covariant differentiation is defined by
$$
\nabla Z_\alpha=\omega_\alpha^\beta\otimes Z_\beta,~~~~\nabla Z_{\bar\alpha}=\omega_{\bar\alpha}^{\bar\beta}\otimes Z_{\bar\beta},~~~~\nabla T=0.
$$
$\{\omega^{\alpha}_\beta\}$ are called connection 1-forms.
For a smooth function $f$ on $M$, we write $f_\alpha=Z_\alpha f,~~f_{\bar\alpha}=Z_{\bar\alpha} f,~~f_0=Tf$, so that
$df=f_\alpha \theta_\alpha+f_{\bar\alpha} \theta_{\bar\alpha}+f_0 \theta$. The second covariant differential $\nabla^2 f$ is the 2-tensor with components
\begin{equation*}
\begin{split}
f_{\alpha\beta} &=\overline{\bar f_{\bar\alpha\bar\beta}}=Z_\beta Z_\alpha f-\omega_\alpha^\gamma(Z_\beta) Z_\gamma f, ~~f_{\alpha\bar\beta} =\overline{\bar f_{\bar\alpha\beta}}=Z_{\bar\beta} Z_\alpha f-\omega_\alpha^\gamma(Z_{\bar\beta}) Z_\gamma f,\\
f_{0\alpha} &=\overline{\bar f_{0\bar\alpha}}=Z_\alpha Tf,~~f_{\alpha0}=\overline{\bar f_{\bar\alpha 0}}=TZ_\alpha f-\omega_\alpha^\gamma(T) Z_\gamma f,~~f_{00}=T^2 f.
\end{split}
\end{equation*}
The connections forms also satisfy
$$
d\omega_\beta^\alpha-\omega_\beta^\gamma\wedge\omega_\gamma^\alpha=\frac{1}{2}R_{\beta~~\rho\sigma}^{~~\alpha}\theta^\rho\wedge\theta^{\sigma}+
\frac{1}{2}R_{\beta~~\bar\rho\bar\sigma}^{~~\alpha}\theta^{\bar\rho}\wedge\theta^{\bar\sigma}
+R_{\beta~~\rho\bar\sigma}^{~~\alpha}\theta^\rho\wedge\theta^{\bar\sigma}+R_{\beta~~\rho 0}^{~~\alpha}\theta^\rho\wedge\theta-R_{\beta~~\bar\sigma 0}^{~~\alpha}\theta^{\bar\sigma}\wedge\theta.
$$
We call $R_{\beta\bar\alpha\rho\bar\sigma}$ the pseudohermitian curvature. Contractions of the pseudohermitian curvature yield the pseudohermitian Ricci curvature $R_{\rho\bar\sigma}=R_{\alpha~~\rho\bar\sigma}^{~~\alpha}$, or $R_{\rho\bar\sigma}=h^{\alpha\bar\beta}R_{\alpha\bar\beta\rho\bar\sigma}$, and the pseudohermitian scalar curvature $R=h^{\rho\bar\sigma}R_{\rho\bar\sigma}$.

The sub-Laplacian operator in this connection can be expressed by
\begin{equation}
\Delta_b u=u^\alpha_\alpha+u^{\bar\alpha}_{\bar\alpha}
\end{equation}

If we define $\tilde{\th}=u^{\frac{2}{n}}\th$, then we have
$$\tilde{\triangle}_b f=u^{-(1+\frac{2}{n})}(u\triangle_b f+2<du,df>_{\th}),$$
where $\tilde{\triangle}_b$ is the sub-Laplacian operator with respect to the contact form $\tilde{\th}$ (see (2.4) in  \cite{Ho12} for example).   If we set
$$\tilde{u}=r^{-1}u,$$
then we have the following CR transformation law
$$(-(\ww)\tilde{\triangle}_b +\tilde{R})\tilde{u}=r^{-1-\frac{2}{n}}(-(\ww)\triangle_b +R)u.$$
If we substitute $r=u$, then we get the CR Yamabe equation \eqref{CRYE}.

If $\{W_1,\cdots,W_n\}$ is a frame for $T^{1,0}$ over some open set $U\subset M$ which is orthonormal with respect to the given pseudo-Hermitian structure on $M$, we call $\{W_1,\cdots,W_n\}$ a pseudo-Hermitian frame. $\{W_1,\cdots,W_n,\overline{W}_1,\cdots, \overline{W}_n, T\}$ forms a local frame for $\mathbb{C}TM$. Now let $U$ be a relatively compact open subset of a normal coordinate neighborhood, with contact form $\th$ and pseudo-Hermitian frame
$\{W_1,\cdots, W_n\}.$ Let $X_j={\rm{Re}} W_j$ and $X_{j+n}={\rm{Im}} W_j$. Denote $X^{\a}=X_{\a_1}\cdots X_{\a_k}$, where $\a=(\a_1,\cdots,\a_k)$. We also denote $l(\a)=k$.
Define the norm
$$\|f\|_{S_k^p(U)}=\sup_{l(\a)\leq k}\|X^{\a}f\|_{L^p(U)}.$$
The Folland-Stein space $S_k^p(U)$ is the completion of $C_0^{\infty}$ with respect to the norm $\|\cdot\|_{S_k^p(U)}$
(See \cite{FS}).  Now we use the notations in \cite{FS} as follows. Denote $H^k$ the Hilbert space $S_k^2$.  Define
$$\Gamma_{\b}(U)=\{f\in C^0(\bar{U}):|f(x)-f(y)|\leq C\r(x,y)^{\b} \},$$
with norm
$$||f||_{\Gamma_{\b}(U)}=\sup_{x\in U}|f(x)|+\sup_{x,y\in U}\frac{|f(x)-f(y)|}{\r(x,y)^{\b}}.$$
For any integer $k\geq 1$ and $k<\b<k+1$, define
$$\Gamma_{\b}(U)=\{f\in C^0(\bar{U}):X^{\a}f\in \Gamma_{\b-k}(U),l(\alpha)\leq k  \},$$
with norm
$$||f||_{\Gamma_{\b}(U)}=\sup_{x\in U}|f(x)|+\sup_{x,y\in U,l(\a)\leq k}\frac{|X^{\a}f(x)-X^{\a}f(y)|}{\r(x,y)^{\b-k}}.$$
If we fix local coordinates $(z,t)=\Theta_{\xi}$
for a fixed point $\xi\in U$, the standard H\"{o}lder space $\Lambda_{\b}(U)$ is defined for $0<\b<1$ by
$$\Lambda_{\b}(U)=\{f\in C^0(\bar{U}):|f(x)-f(y)|\leq C||x-y||^{\b} \},$$
with norm
$$||f||_{\Lambda_{\b}(U)}=\sup_{x\in U}|f(x)|+\sup_{x,y\in U,l(\a)\leq k}\frac{|X^{\a}f(x)-X^{\a}f(y)|}{||x-y||^{\b-k}}.$$
For any integer $k\geq 1$ and $k<\b<k+1$, define
$$\Lambda_{\b}(U)=\{f\in C^0(\bar{U}):(\partial/\partial x)^{\a}f\in \Lambda_{\b-k}(U),l(a)\leq k  \}.$$
Now for a compact strictly pseudo-convex psedo-Hermitian manifold $M$, choose a finite open covering $U_1, \cdots, U_m$, each $U_j$ has the properties
of $U$ above. Choose a $C^{\infty}$ partition of unity $\varphi_i$ subordinate to this covering, and define
$$S_k^p(M)=\{f\in L^1(M):\phi_j f\in S_k^p(U_j) \};$$
$$\Gamma_{\b}(M)=\{f\in C^0(M):\phi_j f\in \Gamma_{\b}(U_j) \};$$
$$\Lambda_{\b}(M)=\{f\in C^0(M):\phi_j f\in \Lambda_{\b}(U_j) \}.$$

Then we have the following  Lemma, see \cite{FS}, or Proposition 5.7 in \cite{JL-JDG}:
\begin{lemma}\label{FS1}
For each positive non-integer $\b$, each $r$, $1<r<\infty$, and each integer $k\geq 1$, there exists a constant $C$ such that for every $f\in
C_0^{\infty}(U)$,\\
(1) $||f||_{\Gamma_{\b}(U)}\leq C ||f||_{S_k^r(U)}$, where $\frac{1}{r}=\frac{k-\b}{2n+2};$\\
(2) $||f||_{\Lambda_{\b/2}}\leq ||f||_{\Gamma_{\b}(U)}$;\\
(3) $||f||_{S_2^r(U)}\leq C (||\triangle_b f||_{L^r(U)}+||f||_{L^r(U)});$\\
(4) $|f||_{\Gamma_{\b+2}(U)}\leq C(||\triangle_b f||_{\Gamma_{\b}(U)}+||f||_{\Gamma_{\b}(U)}).$\\
The constants $C$ depend only on the frame constants.
\end{lemma}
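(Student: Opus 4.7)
The plan is to prove each of the four estimates by reducing to model estimates on the Heisenberg group $\HH^n=\CC^n\times\RR$, which serves as the tangent cone at each point of a pseudo-Hermitian manifold, and then transplanting them to a general $M$ by a freezing-of-coefficients argument in pseudo-Hermitian normal coordinates. The homogeneous dimension of $\HH^n$ under the natural dilations $\delta_s(z,t)=(sz,s^2t)$ is $Q=2n+2$, which accounts for the exponent $2n+2$ appearing in (1). The central analytic tool is the explicit fundamental solution $\Phi$ of the Heisenberg sub-Laplacian, which is left-invariant and homogeneous of degree $-Q+2=-2n$; its first and second horizontal derivatives will be analyzed as Calder\'on-Zygmund kernels on $\HH^n$.

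For (3), I would write $f=\Phi*\triangle_b f$ modulo smoothing terms and differentiate twice horizontally to obtain $X_iX_jf=(X_iX_j\Phi)*\triangle_b f$ plus lower-order terms. The kernel $X_iX_j\Phi$ is homogeneous of degree $-Q$ and satisfies the cancellation conditions required for the Heisenberg Calder\'on-Zygmund theory, giving $L^r$-boundedness for every $1<r<\infty$. For (1), I would iterate this representation to write $X^\a f$ with $l(\a)\le k$ (modulo lower order) as the Heisenberg Riesz potential $I_k$ applied to an $L^r$ function; the Hardy-Littlewood-Sobolev inequality on $\HH^n$ then gives the boundedness $I_k:L^r\to\Gamma_\b$ at the critical exponent $\frac{1}{r}=\frac{k-\b}{Q}$. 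For (4), the Schauder-type counterpart of (3), the same kernel $X_iX_j\Phi$ is shown to map $\Gamma_\b$ boundedly to itself via a Campanato-type characterization of the Folland-Stein H\"older classes. Estimate (2) is purely a distance comparison: in local coordinates one has $\|x-y\|\le C\rho(x,y)$ along horizontal directions but only $\|x-y\|\le C\rho(x,y)^2$ along the characteristic direction $T$, so $\|x-y\|^{1/2}\le C\rho(x,y)$ holds globally and any $\rho$-H\"older estimate of exponent $\b$ automatically yields a Euclidean H\"older estimate of exponent $\b/2$.

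The main obstacle is the passage from these model estimates on $\HH^n$ to genuine estimates on a curved pseudo-Hermitian manifold. In pseudo-Hermitian normal coordinates centered at a point of $M$, the chosen frame $\{X_1,\ldots,X_{2n}\}$ and the operator $\triangle_b$ agree with their flat Heisenberg counterparts to leading order, with remainders that vanish at the origin and are of strictly lower Folland-Stein homogeneity. A standard freezing-of-coefficients argument, combined with smallness of the remainder in sufficiently small coordinate balls and a partition-of-unity assembly on $\bar U$, upgrades the model estimates to the stated local estimates on $U$ with constants depending only on the frame constants, that is, on pointwise bounds for the coefficients of $\omega_\a^\b$ and $h_{\a\bar\b}$ in the given frame. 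All of these technical verifications are carried out in \cite{FS} and Proposition 5.7 of \cite{JL-JDG}, to which I would defer for the detailed bookkeeping.
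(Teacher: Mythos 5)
The paper offers no proof of this lemma: it is quoted verbatim from Folland--Stein \cite{FS} and Proposition 5.7 of \cite{JL-JDG}, and your outline of the strategy in those references (fundamental solution of the Heisenberg sub-Laplacian, Calder\'on--Zygmund theory for the kernels $X_iX_j\Phi$, the critical Riesz-potential estimate with homogeneous dimension $Q=2n+2$, and freezing of coefficients in pseudo-Hermitian normal coordinates) is an accurate description of how (1), (3) and (4) are actually proved, so to that extent your proposal and the paper's implicit proof coincide.

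The one place where you argue directly rather than defer, part (2), contains a reversed inequality. To pass from $|f(x)-f(y)|\le C\rho(x,y)^{\b}$ to $|f(x)-f(y)|\le C'\|x-y\|^{\b/2}$ you need $\rho(x,y)\le C\|x-y\|^{1/2}$, whereas you assert $\|x-y\|^{1/2}\le C\rho(x,y)$. The inequality as you state it is false in horizontal directions: there $\|x-y\|\approx\rho(x,y)$, so $\|x-y\|^{1/2}\approx\rho(x,y)^{1/2}\gg\rho(x,y)$ for nearby points; and even if it held, it would bound $\|x-y\|^{\b/2}$ \emph{by} $\rho^{\b}$, which is the wrong direction for the desired embedding $\Gamma_{\b}\subset\Lambda_{\b/2}$. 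The correct comparison is immediate from the normal-coordinate asymptotics $\rho(x,y)\approx\max(|z|,|t|^{1/2})$ and $\|x-y\|\approx\max(|z|,|t|)$: for nearby points $\rho(x,y)^{2}\approx\max(|z|^{2},|t|)\le\max(|z|,|t|)\approx\|x-y\|$, i.e.\ $\rho(x,y)\le C\|x-y\|^{1/2}$, which yields part (2) at once. With this sign corrected the proposal is sound.
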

We have the following corollary immediately.
\begin{coro} \label{FS2}
Let $(M,\th)$ be a smooth, strictly pseudo-convex $2n+1$ dimensional compact CR manifold without boundary. Then there is an integer $k>0$, such that
$H^k(M)$ embeds into $C^0(M)$.
\end{coro}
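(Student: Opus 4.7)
The plan is to prove Corollary \ref{FS2} by a straightforward Sobolev-embedding argument: first establish a local embedding $H^k(U) \hookrightarrow \Gamma_\beta(U) \subset C^0(U)$ via part (1) of Lemma \ref{FS1}, then globalize using the partition of unity $\{\varphi_i\}$ from the preceding definition of $\Gamma_\beta(M)$, $S_k^p(M)$. The exponent $r=2$ from Lemma \ref{FS1}(1) is unavailable directly because then $\beta = k-(n+1)$ is forced to be an integer, violating the non-integer hypothesis on $\beta$; the first step is therefore to exploit compactness to trade the exponent $r$ for $2$.

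Concretely, I would fix an integer $k \geq n+2$ and choose any non-integer $\beta \in (0, k - (n+1))$. Define $r$ by $\tfrac{1}{r} = \tfrac{k-\beta}{2n+2}$, so that $k - \beta > n+1$ gives $r < 2$. By Lemma \ref{FS1}(1) applied to a normal coordinate neighborhood $U$ with pseudo-Hermitian frame,
\[
\|f\|_{\Gamma_\beta(U)} \leq C \, \|f\|_{S_k^r(U)}
\quad\text{for every } f \in C_0^\infty(U).
\]
Since $\bar U$ has finite volume and $r < 2$, H\"older's inequality yields $\|X^\alpha f\|_{L^r(U)} \leq C\, \|X^\alpha f\|_{L^2(U)}$ for each multi-index $\alpha$ with $l(\alpha) \leq k$, hence $\|f\|_{S_k^r(U)} \leq C\, \|f\|_{S_k^2(U)} = C\, \|f\|_{H^k(U)}$. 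Composing gives the local embedding $H^k(U) \hookrightarrow \Gamma_\beta(U)$.

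To pass to $M$, take the finite open covering $U_1, \dots, U_m$ and partition of unity $\{\varphi_j\}$ used in defining $\Gamma_\beta(M)$ and $S_k^p(M)$. For $f \in H^k(M)$, each $\varphi_j f$ lies in $C_0^\infty(U_j)$ after approximation, and the Leibniz rule (applied to the sub-Riemannian derivatives $X^\alpha$ with $l(\alpha) \leq k$) bounds $\|\varphi_j f\|_{H^k(U_j)}$ by $C\, \|f\|_{H^k(M)}$, with $C$ depending only on the $C^k$-size of $\varphi_j$ relative to the chosen frame. Summing the local estimates over $j$ yields
\[
\|f\|_{\Gamma_\beta(M)} \leq \sum_{j=1}^m \|\varphi_j f\|_{\Gamma_\beta(U_j)} \leq C\, \|f\|_{H^k(M)}.
\]
Since the definition of $\|\cdot\|_{\Gamma_\beta(U)}$ incorporates the supremum $\sup_{U} |f|$, we have $\Gamma_\beta(M) \hookrightarrow C^0(M)$, and the corollary follows with this choice of $k$.

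The main obstacle is only a bookkeeping issue: ensuring that the non-integer constraint on $\beta$ is compatible with $r \leq 2$. Taking $k$ strictly larger than $n+1$ creates the needed slack so that a non-integer $\beta$ with $0 < \beta < k - (n+1)$ makes $r$ strictly less than $2$, after which H\"older's inequality absorbs the exponent mismatch. The globalization via partition of unity is standard and causes no trouble because $M$ is compact and Lemma \ref{FS1} is formulated so that its constants depend only on the frame constants, which are uniformly controlled on the finitely many charts $U_j$.
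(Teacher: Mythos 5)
Your proposal is correct and follows essentially the same route as the paper, which simply cites Lemma \ref{FS1}(1) together with the inclusion $\Gamma_{\beta}(M)\subset C^0(M)$; you have merely filled in the details the paper leaves implicit (choosing $k\geq n+2$ and a non-integer $\beta$ so that $1<r<2$, using H\"older on the finite-volume charts to pass from $L^2$ to $L^r$, and globalizing with the partition of unity). No gaps.
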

\begin{proof}
This is a direct consequence of Lemma \ref{FS1} (1), and $\Gamma_{\b}(M)\subset C^0(M)$.
\end{proof}
Following CR version Sobolev Embedding Theorem was given by Jerison and Lee \cite{JL-JDG}.

\begin{prop}{\rm{(\cite{JL-JDG})}}\label{Theorem 2.1}
For $\frac{1}{s}=\frac{1}{r}-\frac{k}{2n+2}$, where $1<r<s<\infty$. Then we have
$$S_k^r(M)\subset L^s(M).$$
\end{prop}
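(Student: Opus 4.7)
The statement is a version of the Folland--Stein--Sobolev embedding on a compact strictly pseudoconvex CR manifold, and the plan is to reduce it to the corresponding inequality on the Heisenberg group via a partition of unity, then exploit the smoothing properties of (powers of) the sub-Laplacian. Concretely, I would start with the finite open cover $\{U_1,\dots,U_m\}$ and subordinate partition of unity $\{\varphi_j\}$ that was used to define $S_k^r(M)$ itself. By definition it suffices to prove that for each $j$, if $f\in S_k^r(U_j)\cap C_0^\infty(U_j)$ then $f\in L^s(U_j)$ with a bound, since $\|f\|_{L^s(M)}\le\sum_j\|\varphi_j f\|_{L^s(U_j)}$ and the multiplication $f\mapsto \varphi_j f$ is bounded on $S_k^r(U_j)$ (the horizontal vector fields $X^\a$ of order $\le k$ produce only lower-order derivatives of $\varphi_j$, which are smooth).

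Inside each $U_j$ I would pass to CR normal coordinates, so that the pseudo-Hermitian frame $\{W_1,\dots,W_n\}$ agrees with the left-invariant frame of the Heisenberg group $\HH^n$ at the center, and the sub-Laplacian $\triangle_b$ agrees with the Heisenberg sub-Laplacian $\triangle_{\HH^n}$ up to lower-order terms in the Folland--Stein sense. The key tool, due to Folland and Stein, is the representation of $S_k^r$ via the Bessel-type potential $(I-\triangle_b)^{-k/2}$: for $f\in C_0^\infty$ one writes $f=(I-\triangle_b)^{-k/2}g$ with $\|g\|_{L^r}\lesssim \|f\|_{S_k^r}$, and the kernel of $(I-\triangle_b)^{-k/2}$ on $\HH^n$ is asymptotic to the Riesz-type kernel of order $k$ with respect to the Heisenberg homogeneous norm. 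Since $\HH^n$ has homogeneous dimension $Q=2n+2$ (each horizontal direction weighted $1$, the characteristic direction weighted $2$), the Hardy--Littlewood--Sobolev inequality on stratified nilpotent groups gives the $L^r\to L^s$ bound precisely for $\tfrac{1}{s}=\tfrac{1}{r}-\tfrac{k}{2n+2}$.

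To transfer this from the model group to the manifold, I would use the standard perturbation argument: the error between $\triangle_b$ and its Heisenberg model in normal coordinates is of lower homogeneous order, so the kernel of $(I-\triangle_b)^{-k/2}$ on $U_j$ differs from the Heisenberg Riesz kernel by a smoothing operator of the same or better order, and the HLS bound persists with a local constant. Applying this on each chart and summing via $\{\varphi_j\}$ yields the global embedding $S_k^r(M)\hookrightarrow L^s(M)$ with constant depending only on the frame constants.

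The main obstacle in this approach is the kernel analysis of $(I-\triangle_b)^{-k/2}$ on the curved manifold, that is, controlling the deviation of $\triangle_b$ from the Heisenberg model uniformly enough to invoke the sharp HLS inequality. This is exactly the technical content developed in \cite{FS} for the Heisenberg group and adapted in \cite{JL-JDG} to the general strictly pseudoconvex setting; in particular one may cite Theorem~4.5 and Proposition~5.7 of \cite{JL-JDG}, so in practice the proof here reduces to referring to those sources and checking that the definition of $S_k^r(M)$ used here matches theirs through the partition of unity.
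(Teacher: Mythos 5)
The paper offers no proof of this proposition: it is quoted as a known result of Jerison and Lee \cite{JL-JDG} (their Folland--Stein embedding theorem), so there is no in-paper argument to measure you against. Your sketch is a correct reconstruction of how the result is actually proved in \cite{FS} and \cite{JL-JDG}: localize with the same partition of unity used to define $S_k^r(M)$, compare $\triangle_b$ with the Heisenberg sub-Laplacian in normal coordinates, show the relevant inverse operator is a Riesz-type potential of order $k$ relative to the homogeneous dimension $Q=2n+2$, and conclude by the Hardy--Littlewood--Sobolev inequality on stratified groups (this is exactly where the hypothesis $1<r<s<\infty$ is needed). One step is glossed over: writing $f=(I-\triangle_b)^{-k/2}g$ with $\|g\|_{L^r}\lesssim\|f\|_{S_k^r}$ presupposes the equivalence of the Folland--Stein norm with the potential-space norm, which for odd $k$ and for the fractional power itself is a nontrivial piece of the Folland--Stein calculus rather than a formal manipulation. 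The route actually taken in \cite{FS} avoids fractional powers: one represents $f$ by iterated convolution of its horizontal derivatives of order $k$ against derivatives of the fundamental solution of the sub-Laplacian, which produces the order-$k$ Riesz kernel directly and then feeds into HLS. This is not a gap in substance, since you (like the paper) ultimately defer the kernel analysis to \cite{FS} and \cite{JL-JDG}, but if you wanted a self-contained argument the convolution representation is the cleaner starting point.
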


Next we recall a CR version Poincar\'{e} inequality. In \cite{JL-JDG}, Jerison and Lee proved a Poincar\'{e} type inequality for compact, strictly pseudo-convex CR manifolds.

\begin {thm}(See \cite{JL-JDG}, Proposition 5.13) \label{p}
Let $(M,\th_0)$ be a compact, strictly pseudo-convex CR manifold, $U$ is a relatively compact open subset of a normal coordinate neighborhood of $(M,\th)$, $B_r$ is a ball of radius $r$, $B_r\subset U$. Then for any $f$ satisfying $|\nn f|\in L^q(B_r)$, $1<q<\infty$, there exits a constant $C$ independent of $f$ such that
\begin{equation}\label{p1}
\int _{B_r}|f(x)-f_{B_r}|^q \VVV\leq C r^q\int _{B_r}|\nabla_{\th_0}f|^q\VVV,
\end{equation}
where $f_{B_r}=\frac{\int _{B_r}f(x)\VVV}{\int _{B_r}\VVV}$.
\end {thm}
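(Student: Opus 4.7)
The plan is to reduce the inequality on the CR manifold to the corresponding inequality on the Heisenberg group $\mathbb{H}^n$ via the pseudo-Hermitian normal coordinates mentioned earlier, and then to prove the Heisenberg version directly via a representation formula.

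First I would establish the inequality on the Heisenberg group $\mathbb{H}^n$ for the unit gauge ball $B_1$. The essential step is a pointwise representation of the form
\begin{equation*}
f(x) - f_{B_1} = \int_{B_1} K(x,y)\,\nabla_{\mathbb{H}} f(y)\,dy,
\end{equation*}
where the kernel $K(x,y)$ is of weak type $(1,1)$ with homogeneity one below the homogeneous dimension $Q = 2n+2$, i.e., $|K(x,y)| \leq C\,\rho(x,y)^{1-Q}$ on $B_1\times B_1$. Such a kernel is obtained either via Jerison's chaining argument along sub-Riemannian geodesics (combined with a maximal function estimate), or via the Folland--Stein fundamental solution of $\Delta_{\mathbb{H}}$. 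Once this is in hand, applying the $L^q$-boundedness of fractional integrals of homogeneous type yields the Poincar\'e inequality on $B_1$ for all $1 < q < \infty$.

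Second, I would pass from $B_1$ to $B_r$ using the parabolic dilations $\delta_r(z,t) = (rz, r^2 t)$, under which $|\nabla_{\mathbb{H}}(f\circ\delta_r)| = r\,|\nabla_{\mathbb{H}} f|\circ\delta_r$ and $dV$ picks up the factor $r^Q$. A direct change of variables then produces the factor $r^q$ on the right-hand side, giving the Heisenberg Poincar\'e inequality on $B_r$.

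Third, I would transfer to $(M,\th_0)$. Fix $\xi \in U$ and use the CR normal coordinates $\Theta_\xi$ of Jerison--Lee to identify a neighborhood of $\xi$ with a neighborhood of $0$ in $\mathbb{H}^n$. In these coordinates the Levi form, volume element, and horizontal gradient agree with their Heisenberg counterparts at $\xi$, with errors of size $O(\rho^2)$. Consequently, for $B_r \subset U$ one has
\begin{equation*}
dV_{\th_0} = (1+O(r^2))\,dV_{\mathbb{H}}, \qquad |\nabla_{\th_0} f|^2 = |\nabla_{\mathbb{H}}(f\circ\Theta_\xi^{-1})|^2 + O(r^2)|\nabla f|^2 .
\end{equation*}
Inserting these comparisons into the Heisenberg inequality on $\Theta_\xi(B_r)$ and absorbing the perturbations into the constant (for $r$ bounded, which is guaranteed by the compactness of $U$) yields the stated inequality. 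The main obstacle is Step~1: in the Euclidean case one averages $f$ along radial segments, but horizontal curves in $\mathbb{H}^n$ have no constant-direction analogue, so one must either build the representation from sub-Riemannian geodesics (requiring careful control of their existence, length, and Jacobians) or else route through potential theory for $\Delta_{\mathbb{H}}$; the scaling (Step~2) and the normal-coordinate perturbation (Step~3) are then routine.
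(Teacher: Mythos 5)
The paper does not actually prove this statement: it is quoted directly from Jerison--Lee \cite{JL-JDG} (their Proposition 5.13), so there is no internal argument to compare yours against. Judged on its own terms, your outline follows the standard modern route to sub-elliptic Poincar\'e inequalities --- a representation formula with kernel bounded by $\rho(x,y)^{1-Q}$, $Q=2n+2$, followed by $L^q$-boundedness of the resulting potential operator, the parabolic dilation to get the factor $r^q$, and a normal-coordinate perturbation to pass from $\mathbb{H}^n$ to $(M,\theta_0)$ --- and the architecture is sound. One simplification worth noting: once the kernel bound is established, the $L^q(B_1)\to L^q(B_1)$ bound follows from Young's inequality alone, since $\rho^{1-Q}$ is locally integrable; the full fractional-integration theorem is not needed at this step.

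That said, the sketch concentrates essentially all of the difficulty into assertions that are only gestured at. First, the representation formula $f(x)-f_{B_1}=\int_{B_1}K(x,y)\cdot\nabla_{\mathbb{H}}f(y)\,dy$ with $|K|\le C\rho(x,y)^{1-Q}$ \emph{is} the theorem in disguise; it does follow from the Folland--Stein fundamental solution or from Jerison's chaining argument, but invoking it relocates the citation rather than removing it. Second, after transferring to $M$ the natural average is the one taken with respect to $dV_{\mathbb{H}}$ over the coordinate image of the ball, not $f_{B_r}$ as defined in the statement; you need the standard remark that $\|f-f_{B}\|_{L^q(B)}\le 2\inf_{c}\|f-c\|_{L^q(B)}$ to switch averaging constants for free. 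Third, the ball $B_r$ on $M$ (Carnot--Carath\'eodory or gauge ball for $\theta_0$) need not coincide with the $\Theta_\xi$-image of a Heisenberg gauge ball; the two families are only comparable, and converting a Poincar\'e inequality on a comparable enclosing ball into one on $B_r$ itself requires the ball-box/doubling machinery or a chaining self-improvement step, which should be acknowledged. None of these is a fatal flaw, but as written the proposal is a proof scheme whose key lemma is itself a literature citation --- which, to be fair, is exactly the status of the statement in the paper.
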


As a corollary of Theorem \ref{p}, we have
\begin {lemma}\label{p2}
Under the condition of Theorem \ref{p}, we have the following Poincar\'{e} type inequality:
$$\int _{B_r}|f(x)|^2 \VVV\leq C\int _{B_r}|\nabla_{\th_0}f|^2\VVV,$$
where $C$ is a positive constant independent of $f$.
\end {lemma}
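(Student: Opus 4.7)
The plan is to deduce Lemma \ref{p2} directly as the $q=2$ specialization of Theorem \ref{p}. Setting $q=2$ in \eqref{p1} immediately gives
\[
\int_{B_r} |f - f_{B_r}|^2 \, \VVV \leq C r^2 \int_{B_r} |\nn f|^2 \, \VVV,
\]
with $C$ independent of $f$.

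Passing from this bound to the stated form requires two essentially bookkeeping observations. First, replacing $f$ by $f - f_{B_r}$ leaves the right-hand side unchanged (since constants are annihilated by $\nn$) and makes the left-hand side equal to $\int_{B_r} |f|^2 \, \VVV$ precisely when $f_{B_r} = 0$; this vanishing-average normalization is the natural assumption under which an $L^2$ Poincar\'e-type inequality without a $|f - f_{B_r}|$ term can hold. Second, since the radius $r$ of the fixed ball $B_r$ is a fixed quantity in the statement, the prefactor $r^2$ can be absorbed into a single new constant $C' := C r^2$, still independent of $f$.

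The only point requiring attention is the uniformity of the constant with respect to $f$, and this is supplied directly by the corresponding clause of Theorem \ref{p}. I do not expect any substantive obstacle: the content of Lemma \ref{p2} is simply the $q=2$ case of the Jerison--Lee Poincar\'e inequality, together with the absorption of the fixed radius into the constant.
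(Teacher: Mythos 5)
Your proof is correct and takes essentially the same approach as the paper: the $q=2$ case of Theorem \ref{p}, with the factor $r^2$ absorbed into the constant for the fixed ball $B_r$. The zero-average normalization you flag as necessary is exactly what the paper's own proof encodes implicitly by writing $f=v-v_{B_r}$ for some $v$ (which forces $f_{B_r}=0$), so your explicit caveat is the same hypothesis, just stated more carefully.
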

\begin{proof}
We choose $v(x)$ satisfying $f(x)=v(x)-v_{B_r}$. Since $|\nabla_{\th_0}f|^2=|\nabla_{\th_0}v|^2$,  this lemma follows from Theorem \ref{p} by letting $q=2$.
\end{proof}

By the above Poincar\'{e} inequalities, we know for any $x_0\in M$, there exists a ball $B_r{(x_0)}$ such that the above Poincar\'{e} inequalities are satisfied on $B_r{(x_0)}$. Since $(M,\th_0)$ is compact, then we can obtain the following global Poincar\'{e} inequalities, which are the corollaries of Theorem \ref{p} and Lemma \ref{p2}.
\begin{coro}\label{p3}
Under the condition of Theorem \ref{p}, for any $f\in C^{\infty}(M)$, we have the following global Poincar\'{e} inequality:
\begin{equation}\label{p33}
\int _M|f(x)-\bar{f}|^2 \VVV\leq C\int_M|\nabla_{\th_0}f|^2\VVV,
\end{equation}
where $C$ is a positive constant independent of $f$, and $\bar{f}=\frac{\int _M f(x)\VVV}{\int _M\VVV}$.
\end{coro}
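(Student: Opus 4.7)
The plan is a covering-and-chaining argument, using the local Poincar\'e inequality Lemma \ref{p2} on balls together with the compactness and connectedness of $M$. Since $\int_M(f-\bar f)^2\,dV_{\th_0}=\min_{c\in\RR}\int_M(f-c)^2\,dV_{\th_0}$, it suffices to prove the stated bound with $\bar f$ replaced by some convenient constant $c$.

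First I would cover $M$ by a finite collection $\{B_i\}_{i=1}^N$ of CR balls, each contained in a normal coordinate neighborhood, so that Lemma \ref{p2} applies on each $B_i$. By refining this cover (via a Vitali-type procedure and shrinking radii) I may arrange it so that it is \emph{chained}: for any two indices $i,j$ there is a chain $i=i_0,i_1,\ldots,i_m=j$ with $|B_{i_k}\cap B_{i_{k+1}}|\ge\delta$ for a uniform $\delta>0$, the chain length $m$ bounded by $N$. Connectedness of $M$ (argued componentwise otherwise) is what makes this possible.

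Next, for each adjacent pair in a chain, the inequality $(a-b)^2\le 2(a-c)^2+2(c-b)^2$ applied with $a=f_{B_i}$, $b=f_{B_j}$, $c=f(x)$ and integrated over $B_i\cap B_j$, together with Lemma \ref{p2} on each ball, gives
\[
\delta\,|f_{B_i}-f_{B_j}|^2\le C\int_{B_i\cup B_j}|\nn f|^2\,dV_{\th_0}.
\]
Iterating along the chain from $B_1$ to $B_i$, I obtain $|f_{B_1}-f_{B_i}|^2\le C\|\nn f\|_{L^2(M)}^2$ for every $i$, with $C$ depending only on the cover.

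Finally I set $c=f_{B_1}$ and sum over the cover:
\[
\int_M(f-c)^2\,dV_{\th_0}\le\sum_{i=1}^N\int_{B_i}(f-c)^2\,dV_{\th_0}\le 2\sum_i\int_{B_i}(f-f_{B_i})^2\,dV_{\th_0}+2\sum_i|B_i|(f_{B_i}-c)^2,
\]
and both sums are controlled by $C\int_M|\nn f|^2\,dV_{\th_0}$ by the preceding steps. The minimizing property of $\bar f$ then yields \eqref{p33}. The one nontrivial input is the construction of the chained cover with a uniform lower bound on consecutive overlaps; once that is in place, the remaining steps are routine summation and the elementary inequality above, so I expect this geometric setup to be the main obstacle (rather than the analytic estimates themselves).
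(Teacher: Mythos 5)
Your chaining argument is correct and takes essentially the paper's route: the paper's own ``proof'' is a one-sentence assertion that the global inequality follows from the local Poincar\'e inequality (Theorem \ref{p}) by compactness of $M$, and your covering-and-chaining construction with uniformly overlapping balls is the standard way to make that assertion rigorous, so you have simply supplied the details the authors omit. One small caveat: connectedness of $M$ is genuinely needed (for a disconnected manifold the inequality with the global mean $\bar f$ fails for locally constant $f$), so your parenthetical ``argued componentwise otherwise'' would not rescue that case; under the usual convention that $M$ is connected, your argument is complete.
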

\begin{coro}
Under the condition of Theorem \ref{p}, for any  $f\in C^{\infty}(M)$, we have the following global Poincar\'{e} inequality:
\begin{equation}\label{p4}
\int _M|f(x)|^2 \VVV\leq C\int_M|\nabla_{\th_0}f|^2\VVV,
\end{equation}
where $C$ is a positive constant independent of $f$.
\end{coro}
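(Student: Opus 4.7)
The natural approach is to deduce \eqref{p4} directly from Corollary \ref{p3}. Note first that as literally written the inequality cannot hold for every $f\in C^\infty(M)$: any nonzero constant function has $|\nabla_{\theta_0}f|\equiv 0$ while $\int_M |f|^2\,\VVV>0$. So, just as in the proof of Lemma \ref{p2} (where the step $f=v-v_{B_r}$ silently forces $f_{B_r}=0$), the statement must be read with the implicit restriction $\bar f=\frac{1}{\int_M \VVV}\int_M f\,\VVV=0$. This is also the form in which the corollary will actually be used in Section~3, where it is applied to mean-zero quantities built out of $\tilde R-\tilde r$.

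Under that mean-zero hypothesis the proof collapses to one line: since $f=f-\bar f$, \eqref{p33} gives
\[
\int_M |f|^2\,\VVV=\int_M |f-\bar f|^2\,\VVV\leq C\int_M |\nabla_{\theta_0}f|^2\,\VVV,
\]
with the same constant $C$ as in Corollary \ref{p3}, which itself came from a covering argument plus Theorem \ref{p}.

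The only step that requires any care is identifying the implicit hypothesis, since writing the inequality ``for any $f\in C^\infty(M)$'' is, strictly speaking, too strong. Once that is pointed out, there is no substantive obstacle and no further analysis is needed beyond what Corollary \ref{p3} already provides.
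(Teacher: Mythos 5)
Your argument is correct under the mean-zero reading, and your diagnosis of the statement is right: as literally written, \eqref{p4} fails for any nonzero constant function, so the corollary only makes sense with the implicit normalization $\bar f=0$ --- exactly the normalization hidden in the step ``choose $v$ with $f=v-v_{B_r}$'' in the proof of Lemma \ref{p2}, which silently forces $f_{B_r}=0$. Your route also differs from the paper's: the paper gives no written proof, asserting instead that \eqref{p4} follows from the local Lemma \ref{p2} by compactness and a covering argument, whereas you deduce it in one line from the already-global Corollary \ref{p3} via $\int_M|f|^2\,\VVV=\int_M|f-\bar f|^2\,\VVV\le C\int_M|\nn f|^2\,\VVV$. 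Your derivation is the more robust of the two: patching together local inequalities of the form $\int_{B_r}|f|^2\le C\int_{B_r}|\nn f|^2$ is delicate, because each local version carries its own normalization $f_{B_r(x_0)}=0$ and a single $f$ cannot satisfy all of them at once, so the covering argument inherits (and obscures) the same hidden hypothesis; passing through the global mean-zero inequality \eqref{p33} sidesteps this entirely and even preserves the constant. One small correction: the corollary does not in fact appear to be invoked later in Section~3 (the arguments there use Corollary \ref{p3} and Theorem \ref{p5} directly), so your remark about its use on quantities built from $\tilde R-\tilde r$ is a reasonable guess but not borne out by the text; this does not affect the validity of your proof.
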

Now we prove the following theorem, which is a Poincar\'{e} type inequality.
\begin{thm}\label{p5}
Let $(M,\th_0)$ be a compact, strictly pseudoconvex CR manifold. For any  $f\in C^{\infty}(M)$, we have the following global Poincar\'{e} type inequality:
$$\|\nabla_{\th_0} f\|_{L^2(M,\th_0)}\leq C\|\triangle_b f\|_{L^2(M,\th_0)},$$
for some $C>0$ independent of $f$.
\end{thm}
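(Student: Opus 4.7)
The plan is to reduce the estimate to the mean-zero global Poincar\'e inequality of Corollary \ref{p3} via the defining integration-by-parts identity for $\triangle_b$. Let $\bar f := \frac{1}{\int_M dV_{\theta_0}}\int_M f\,dV_{\theta_0}$ denote the average of $f$. Since $\bar f$ is constant, $\nabla_{\theta_0}(f-\bar f)=\nabla_{\theta_0}f$ and $\triangle_b(f-\bar f)=\triangle_b f$, so the desired inequality is invariant under replacing $f$ with its zero-mean part.

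Applying the definition $\int_M(\triangle_b u)\phi\,dV_{\theta_0}=-\int_M\langle du,d\phi\rangle_{\theta_0}\,dV_{\theta_0}$ with $u=f$ and $\phi=f-\bar f$, I obtain
\begin{equation*}
\int_M|\nabla_{\theta_0}f|^2\,dV_{\theta_0}=-\int_M(f-\bar f)\,\triangle_b f\,dV_{\theta_0}.
\end{equation*}
Cauchy--Schwarz followed by Corollary \ref{p3} then gives
\begin{equation*}
\|\nabla_{\theta_0}f\|_{L^2(M,\theta_0)}^2 \leq \|f-\bar f\|_{L^2(M,\theta_0)}\,\|\triangle_b f\|_{L^2(M,\theta_0)} \leq C^{1/2}\,\|\nabla_{\theta_0}f\|_{L^2(M,\theta_0)}\,\|\triangle_b f\|_{L^2(M,\theta_0)}.
\end{equation*}
Dividing through by $\|\nabla_{\theta_0}f\|_{L^2(M,\theta_0)}$ (the inequality is trivial when this quantity vanishes, i.e.\ when $f$ is constant) yields the claimed bound.

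There is no real obstacle here: the only subtle point is selecting the correct form of the Poincar\'e inequality. One cannot use the version $\|f\|_{L^2}\leq C\|\nabla_{\theta_0}f\|_{L^2}$ without mean subtraction on the term $\|f-\bar f\|_{L^2}$ arising from Cauchy--Schwarz, because the constant mode would then be unaccounted for. Once that reduction is performed, the proof collapses into a single application of Cauchy--Schwarz combined with Corollary \ref{p3}.
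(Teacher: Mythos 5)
Your proof is correct, and it takes a genuinely more direct route than the paper. The paper argues by contradiction: it invokes the subelliptic estimate $\|f\|_{S_2^2}\leq C(\|\triangle_b f\|_{L^2}+\|f\|_{L^2})$, normalizes a putative counterexample sequence by $\|\nabla_{\theta_0}f_j\|_{L^2}=1$, subtracts means, extracts a weakly convergent subsequence in $S_2^2$ converging strongly in $S_1^2$, and only then uses the identity $\int_M|\nabla_{\theta_0}u_j|^2\,dV_{\theta_0}=-\int_M u_j\,\triangle_b u_j\,dV_{\theta_0}$ together with Cauchy--Schwarz to reach a contradiction. You instead observe that the same integration-by-parts identity, applied with the test function $f-\bar f$, combines with Cauchy--Schwarz and the mean-zero global Poincar\'e inequality (Corollary \ref{p3}) to give $\|\nabla_{\theta_0}f\|_{L^2}^2\leq C^{1/2}\|\nabla_{\theta_0}f\|_{L^2}\|\triangle_b f\|_{L^2}$ directly. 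This buys you three things: you bypass the compactness machinery entirely (the paper's argument tacitly relies on a Rellich-type compact embedding to pass from weak $S_2^2$ convergence to strong $S_1^2$ convergence, a point it does not justify), you obtain an explicit constant in terms of the Poincar\'e constant rather than a non-effective one, and the proof is shorter. In fact, the final display of the paper's own proof contains precisely your chain of inequalities, so your argument can be seen as stripping away the superfluous contradiction scaffolding. Your handling of the degenerate case $\|\nabla_{\theta_0}f\|_{L^2}=0$ before dividing is also the right care to take.
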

\begin{proof}
From Proposition 5.7(c) in \cite{JL-JDG}, we know there is a positive constant $C$ independent of $f$, such that
$$\|f\|_{S_2^2(M,\th_0)}\leq C(\|\triangle_b f\|_{L^2(M,\th_0)}+\|f\|_{L^2(M,\th_0)}).$$
Therefore we obtain
\begin{equation}\label{2nd}
\|\nn f\|_{L^2(M,\th_0)}\leq C(\|\triangle_b f\|_{L^2(M,\th_0)}+\|f\|_{L^2(M,\th_0)}).
\end{equation}

We use the contradiction argument to prove the inequality. Suppose the inequality in the theorem is not true, then there exists a sequence $\{f_j\}$ such that
$$j\|\triangle_b f_j\|_{L^2(M,\th_0)}\leq  \|\nn f_j\|_{L^2(M,\th_0)},$$
Then by \eqref{2nd}, we have
$$ \|\nn f_j\|_{L^2(M,\th_0)}\leq C(\|\triangle_b f_j\|_{L^2(M,\th_0)}+\|f_j\|_{L^2(M,\th_0)}).$$
We may require that $\|\nn f_j\|_{L^2(M,\th_0)}=1$, for any $j$. Thus, as $j$ tends to infinity, we have
$$\|\triangle_b f_j\|_{L^2(M,\th_0)}\rightarrow 0.$$
Let $u_j=f_j-\bar{f_j}$, here $\bar{f_j}=\frac{\tt f_j \VVV}{\tt \VVV}$. By \eqref{p33}, we have
$$\|f_j-\bar{f_j}\|_{L^2(M,\th_0)}\leq \|\nn f_j\|_{L^2(M,\th_0)}\leq C .$$
Then there is a subsequence of $u_j$ converges weakly in $S_2^2$, we may assume it is $u_j$ itself. Then we have $u_j\rightarrow u$ in $S_1^2$ sense for some $u$, and
$$\tt |\nn u_j|^2 \VVV=-\tt u_j\triangle_b u_j\VVV\le \|u_j\|_{L^2(M, \theta_0)}\cdot\|\triangle_b u_j\|_{L^2(M, \theta_0)}\rightarrow 0$$
as $j\rightarrow \infty$, which means $\|\nn u\|_{L^2(M,\th_0)}=0$. This contradicts the fact that $\|\nn u_j\|_{L^2(M,\th_0)}=\|\nn f_j\|_{L^2(M,\th_0)}=1$.
\end{proof}

At the end of this section, we recall some basic properties of the CR Yamabe flow \eqref {CRYF}. Under this flow, we have the following evolution equations \cite{Ho12}.
\begin {lemma}
Under the CR-Yamabe flow \eqref{CRYF}, we have
\\(1) $\frac{\partial}{\partial t}dV_{\tilde{\th}}=-(n+1)(\tilde{R}-\tilde{r})dV_{\tilde{\th}};$
\\(2) $\frac{\partial}{\partial t}u=-\frac{n}{2}(\tilde{R}-\tilde{r})u$;
\\(3) $\frac{d\tilde{r}}{dt}=-n\int_M (\tilde{R}-\tilde{r})^2dV_{\tilde{\th}};$
\\(4) $\frac{\partial}{\partial t}\tilde{R}=(n+1)\tilde{\triangle}_b\tilde{R}+(\tilde{R}-\tilde{r})\tilde{R};$
\end {lemma}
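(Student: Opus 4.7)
The plan is to derive the four formulas in the order $(2)\to(1)\to(4)\to(3)$, since the later identities will rely on the earlier ones.

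First I would obtain $(2)$ by differentiating the conformal relation $\tilde\theta=u^{2/n}\theta_0$ in $t$. Since $\theta_0$ is time-independent,
$$\frac{\partial}{\partial t}\tilde\theta=\frac{2}{n}u^{2/n-1}\frac{\partial u}{\partial t}\,\theta_0.$$
Equating this with the flow equation $\partial_t\tilde\theta=-(\tilde R-\tilde r)u^{2/n}\theta_0$ and cancelling $u^{2/n}\theta_0$ immediately gives $\partial_t u=-\tfrac{n}{2}(\tilde R-\tilde r)u$.

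For $(1)$ I would first establish the pointwise volume comparison $dV_{\tilde\theta}=u^{(2n+2)/n}\,dV_{\theta_0}$. This follows by expanding $\tilde\theta\wedge(d\tilde\theta)^n$ with $\tilde\theta=u^{2/n}\theta_0$: the cross terms of $(d\tilde\theta)^n=(u^{2/n}d\theta_0+d(u^{2/n})\wedge\theta_0)^n$ that contain a second copy of $\theta_0$ vanish after wedging with $\tilde\theta$, leaving only $u^{(2n+2)/n}\theta_0\wedge(d\theta_0)^n$. Differentiating in $t$ and inserting $(2)$ produces $(1)$ after noting $(2+2/n)\cdot(n/2)=n+1$.

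Formula $(4)$ is the technically hardest step. Here I would exploit the CR Yamabe (conformal transformation) equation \eqref{CRYE} relating $\tilde R$ to the background data,
$$-(2+\tfrac{2}{n})\triangle_b u+Ru=\tilde R\,u^{1+2/n},$$
and differentiate in $t$. Using $(2)$ to replace $\partial_t u$ and its derivatives, I would collect all the $\Delta_b$ contributions and convert them back to $\tilde\triangle_b\tilde R$ via the conformal rule
$$\tilde\triangle_b f=u^{-1-2/n}\bigl(u\triangle_b f+2\langle du,df\rangle_\theta\bigr)$$
stated earlier in the excerpt. The cancellations are arranged so that the coefficient of $\tilde\triangle_b\tilde R$ turns out to be exactly $(2+\tfrac{2}{n})\cdot\tfrac{n}{2}=n+1$, while the remaining algebraic terms collapse to $(\tilde R-\tilde r)\tilde R$.

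Finally I would derive $(3)$ from $(1)$ and $(4)$. From $(1)$ one sees $\tfrac{d}{dt}\int_M dV_{\tilde\theta}=-(n+1)\int_M(\tilde R-\tilde r)dV_{\tilde\theta}=0$ by the very definition of $\tilde r$, so the total volume $V$ is conserved. Differentiating $V\tilde r=\int_M\tilde R\,dV_{\tilde\theta}$ and applying $(4)$ and $(1)$,
$$V\frac{d\tilde r}{dt}=\int_M\Bigl[(n+1)\tilde\triangle_b\tilde R+(\tilde R-\tilde r)\tilde R\Bigr]dV_{\tilde\theta}-(n+1)\int_M\tilde R(\tilde R-\tilde r)dV_{\tilde\theta}.$$
The Laplacian term is a divergence and integrates to zero, while the surviving combination simplifies, using $\int_M(\tilde R-\tilde r)\,dV_{\tilde\theta}=0$, to $-n\int_M(\tilde R-\tilde r)^2 dV_{\tilde\theta}$, which is $(3)$. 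The main obstacle I expect is keeping the bookkeeping in step $(4)$ clean, because the sub-Laplacian itself varies with the conformal factor and every appearance of $\triangle_b u$ or $\langle du,d\tilde R\rangle_\theta$ must be re-expressed in terms of the time-dependent $\tilde\triangle_b$.
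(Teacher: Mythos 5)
Your derivation is correct and is essentially the standard one from \cite{Ho12}, which the paper cites for this lemma without reproducing a proof: the order $(2)\to(1)\to(4)\to(3)$, the identity $dV_{\tilde\theta}=u^{2+2/n}dV_{\theta_0}$, and the differentiation of the conformal transformation law for $\tilde R$ all match, and the cancellation you anticipate in step (4) does occur, since the two algebraic contributions combine as $\tfrac{n+2}{2}(\tilde R-\tilde r)\tilde R-\tfrac{n}{2}(\tilde R-\tilde r)\tilde R=(\tilde R-\tilde r)\tilde R$. The only cosmetic point is in (3): your computation gives $V\,\tfrac{d\tilde r}{dt}=-n\int_M(\tilde R-\tilde r)^2\,dV_{\tilde\theta}$, so the identity as stated requires the total volume to be normalized to $1$ (as in \cite{Ho12}); this does not affect the monotonicity of $\tilde r$, which is all that is used later.
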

We also need the following lemmata, which were proved in \cite{Ho12} (Propositions 3.1, 3.3 and 3.4).
\begin {lemma}\label{v}
The volume of $M$ does not change under the CR Yamabe flow.
\end {lemma}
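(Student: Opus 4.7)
The plan is to differentiate the total volume $V(t) := \int_M dV_{\tilde{\th}(t)}$ in time and apply part (1) of the preceding lemma directly. Since $M$ is compact and the solution is smooth, I can exchange $\frac{d}{dt}$ with the integral over $M$, which gives
\begin{equation*}
\frac{dV}{dt} = \int_M \frac{\partial}{\partial t} dV_{\tilde{\th}} = -(n+1)\int_M (\tilde{R}-\tilde{r}) \, dV_{\tilde{\th}}.
\end{equation*}

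The remaining step is to observe that the right-hand side vanishes identically as a consequence of the very definition of $\tilde{r}$. Indeed, at each fixed time $t$, $\tilde{r}$ is a spatial constant given by
\begin{equation*}
\tilde{r} = \frac{\int_M \tilde{R} \, dV_{\tilde{\th}}}{\int_M dV_{\tilde{\th}}},
\end{equation*}
so pulling it out of the integral yields $\int_M \tilde{r}\, dV_{\tilde{\th}} = \int_M \tilde{R} \, dV_{\tilde{\th}}$, hence $\int_M (\tilde{R}-\tilde{r})\, dV_{\tilde{\th}} = 0$. Combining this with the previous display gives $\frac{dV}{dt}\equiv 0$, and therefore $V(t) = V(0)$ for all $t$ in the interval of existence.

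There is no substantive obstacle here: the subtraction of the average $\tilde{r}$ in the CR Yamabe flow \eqref{CRYF} is precisely designed to enforce volume preservation, and the present lemma is essentially a tautological consequence of that normalization together with item (1) of the evolution lemma. The only minor technical point worth mentioning is the interchange of $\frac{d}{dt}$ with $\int_M$, which is routine because $\tilde{R}$ and $\tilde{r}$ depend smoothly on $t$ on $[0,T)$ and $M$ is compact.
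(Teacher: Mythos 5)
Your proof is correct and is the standard argument: differentiating $V(t)$, substituting the evolution equation $\frac{\partial}{\partial t}dV_{\tilde{\th}}=-(n+1)(\tilde{R}-\tilde{r})dV_{\tilde{\th}}$, and observing that $\int_M(\tilde{R}-\tilde{r})\,dV_{\tilde{\th}}=0$ by the definition of $\tilde{r}$. The paper does not prove this lemma itself but cites Proposition 3.1 of \cite{Ho12}, where exactly this computation is carried out, so your argument matches the intended one.
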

\begin {lemma}\label{r}
The function $t\mapsto \tilde{r}(t)$ is bounded from below and non-increasing under \eqref{CRYF}.
\end {lemma}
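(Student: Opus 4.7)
The plan is to establish the two assertions independently: monotonicity is an immediate consequence of evolution identity (3) of the preceding lemma, while the lower bound follows from the variational characterization of the CR Yamabe invariant together with Lemma \ref{v}.

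For monotonicity, identity (3) reads
\beq
\dd \tilde{r}(t) \;=\; -n\tt (\tilde R-\tilde r)^2\, dV_{\tilde\th}\;\le\;0,
\eeq
from which the conclusion is immediate.

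For the lower bound, I would write $\tilde\th(t)=u(t)^{\qq}\th_0$, so that a direct computation of the conformal change of the volume form gives $dV_{\tilde\th}=u^{\ww}\VVV$. Multiplying the CR Yamabe equation \eqref{CRYE} (with $\th$ replaced by $\th_0$ and $u=u(t)$) by $u$ and integrating by parts on $(M,\th_0)$ yields
\beqs
\tt \tilde R\, dV_{\tilde\th} \;=\; \tt\!\left[(\ww)|\nn u|^2 + R\,u^2\right]\VVV \;\ge\; \l(M)\,V^{\frac{n}{n+1}},
\eeqs
where the final inequality is the definition of $\l(M)$ and $V=\vol(M,\tilde\th)=\tt u^{\ww}\VVV$. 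Lemma \ref{v} ensures $V$ is independent of $t$, so dividing by $V$ gives
\beqs
\tilde r(t) \;\ge\; \l(M)\,V^{-\frac{1}{n+1}},
\eeqs
a finite constant independent of $t$, which is the asserted lower bound.

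The only obstacle here is bookkeeping: one must keep track of the correct power of $u$ in $dV_{\tilde\th}=u^{\ww}\VVV$ and perform a single integration by parts. No delicate analytic estimate is required; the proof reduces to identity (3), the volume preservation of Lemma \ref{v}, and the defining inequality of $\l(M)$. In particular, when $\l(M)=0$ (the standing assumption of Theorem \ref{main}), the bound specializes to $\tilde r(t)\ge 0$, so together with monotonicity the quantity $\tilde r(t)$ converges to some limit in $[0,\tilde r(0)]$, a first useful step toward the convergence proof of the main theorem.
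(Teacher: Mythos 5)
Your proof is correct and is essentially the standard argument: monotonicity from the evolution identity $\dd\tilde r=-n\tt(\tilde R-\tilde r)^2\,dV_{\tilde\th}\le 0$, and the lower bound $\tilde r(t)\ge\l(M)\,V^{-\frac{1}{n+1}}$ from testing the CR Yamabe equation against $u$, the definition of $\l(M)$, and the volume invariance of Lemma \ref{v}. The paper does not reprove this lemma but cites Propositions 3.1, 3.3 and 3.4 of \cite{Ho12}, where the argument is exactly the one you give.
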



\section {Scalar flat case of the CR Yamabe flow}
By the CR Yamabe equation \eqref {CRYE}, we can reduce the CR Yamabe flow \eqref {CRYF} to the following evolution equation of the conformal factor:
\begin{equation}\label{1}
\pp u^{\rr}=\frac{(n+2)(n+1)}{n}(\ss u+\frac{n}{2n+2}\aa u^{\rr})
\end{equation}
with $u(\cdot,0)^{\qq}\th_0=\th$. We have the following lemma.
\begin {lemma}\label {3.1}
Under the condition of Theorem \ref {main}, $\aa \geq 0$ for all the time.
\end {lemma}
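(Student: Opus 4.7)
The plan is to prove $\tilde r(t)\ge 0$ for all $t$ by a direct computation that exploits the choice of the background contact form $\th_0$ as a flat representative in $[\th_0]$. Since at each time the evolving contact form is $\tilde\th(t)=u(t)^{2/n}\th_0$, the conformal change law \eqref{CRYE} applied with background $\th_0$ (which has $R_{\th_0}\equiv 0$ by assumption) gives
\[
\tilde R(t)\,u(t)^{1+\frac{2}{n}} \;=\; -\Bigl(2+\tfrac{2}{n}\Bigr)\triangle_b u(t)+R_{\th_0}u(t)\;=\;-\Bigl(2+\tfrac{2}{n}\Bigr)\triangle_b u(t).
\]
This is the key identity: it converts curvature integrals into Dirichlet energy integrals of $u$ with respect to the fixed background $\th_0$.

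Next, I would compute the total scalar curvature at time $t$ using the conformal volume change $dV_{\tilde\th(t)}=u(t)^{2+2/n}\,dV_{\th_0}$. Writing $\int_M\tilde R\,dV_{\tilde\th}=\int_M[\tilde R\,u^{1+2/n}]\cdot u\,dV_{\th_0}$ and substituting the identity above, an integration by parts (legitimate since $M$ is closed) yields
\[
\int_M \tilde R(t)\,dV_{\tilde\th(t)} \;=\; -\Bigl(2+\tfrac{2}{n}\Bigr)\int_M u(t)\,\triangle_b u(t)\,dV_{\th_0} \;=\; \Bigl(2+\tfrac{2}{n}\Bigr)\int_M|\nabla_{\th_0}u(t)|^2\,dV_{\th_0}\;\ge\;0.
\]
Dividing by the (positive) volume $\int_M dV_{\tilde\th(t)}$ then yields $\tilde r(t)\ge 0$. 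Equivalently, one can read this off from the definition of the CR Yamabe invariant: since $\l(M)=0$ and $u(t)>0$ is a valid test function, the numerator of the Yamabe quotient is non-negative, which is exactly $\int_M\tilde R(t)\,dV_{\tilde\th(t)}\ge 0$.

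There is essentially no obstacle here: the argument is a direct conformal/integration-by-parts calculation, and the only ingredients are $R_{\th_0}=0$, positivity of $u(t)$ (preserved by the flow via the maximum principle / short-time existence), and the fact that $u(t)\in S^2_1(M)$ so the integration by parts is valid. The statement does not even require the monotonicity of $\tilde r$ from Lemma \ref{r}; that lemma combined with this one will give us that $\tilde r(t)$ is a bounded, non-negative, non-increasing function, which is what is used later to drive the exponential convergence argument.
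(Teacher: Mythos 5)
Your proof is correct and in substance the same as the paper's: the paper simply observes that $\int_M\tilde R\,dV_{\tilde\th}$ is the numerator of the CR Yamabe quotient at the positive test function $u(t)$, hence nonnegative because $\l(M)=0$ is the infimum of that quotient --- which is exactly the ``equivalent'' remark you make at the end. Your primary computation (using $R_{\th_0}=0$ and integrating by parts to get $\int_M\tilde R\,dV_{\tilde\th}=(2+\tfrac 2n)\|\nabla_{\th_0}u\|_{L^2(M,\th_0)}^2\ge 0$) is just an explicit unpacking of that same quotient under the paper's normalization of the background form, and is the identity the paper itself uses later in \eqref{5}.
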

\begin {proof}
By the definition of $\l (M)$, we obtain
$$\l(M)=\inf\{\frac{\aa}{(\int_M u^{2+\frac{2}{n}}\VVV)^{\frac{n}{n+1}}}:u>0,u\in S^2_1(M)\}.$$
Since $\lambda(M)=0$, we therefore have $\aa \geq 0$.
\end {proof}
Then we have the following corollary:
\begin{coro}
Under the condition of Theorem \ref {main}, if $\th=\th_0$, then the Yamabe flow \eqref {CRYF1} exists for all time, and $\aa\equiv 0$, $u\equiv 1$.
\end{coro}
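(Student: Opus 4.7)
The plan is to exploit the fact that $\theta_0$ was chosen to have flat pseudo-Hermitian scalar curvature, so when $\theta=\theta_0$ the initial data of the flow is already a critical point, and deduce that the flow is literally stationary. First I would observe that $\theta=\theta_0$ forces $u(\cdot,0)\equiv 1$ and hence $\tilde{\theta}(0)=\theta_0$, from which $\tilde{R}(\cdot,0)=R=0$ on all of $M$. Taking the mean then gives $\tilde{r}(0)=0$.

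Next, on the maximal interval of existence guaranteed by the short-time result of Chang and Cheng, I would combine Lemma \ref{3.1} (which uses $\lambda(M)=0$ to ensure $\tilde{r}(t)\geq 0$) with Lemma \ref{r} (which states $\tilde{r}$ is non-increasing under the flow). Together with $\tilde{r}(0)=0$, these sandwich $\tilde{r}(t)$ between $0$ and a non-increasing quantity that starts at $0$, forcing $\tilde{r}(t)\equiv 0$ throughout the interval of existence.

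Then I would plug $\tilde{r}\equiv 0$ into evolution equation (3) from the lemma of Section 2,
\[
0=\frac{d\tilde{r}}{dt}=-n\int_M(\tilde{R}-\tilde{r})^2\,dV_{\tilde{\theta}}=-n\int_M \tilde{R}^2\,dV_{\tilde{\theta}},
\]
which forces $\tilde{R}(\cdot,t)\equiv 0$. Inserting this back into the flow equation \eqref{CRYF1} gives $\frac{\partial}{\partial t}\tilde{\theta}=-(\tilde{R}-\tilde{r})\tilde{\theta}=0$, so $\tilde{\theta}(t)\equiv\theta_0$ and consequently $u(\cdot,t)\equiv 1$. Since the resulting solution is stationary and smooth, long-time existence is immediate.

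I do not expect a genuine obstacle here: this is essentially an identification of the stationary solution, so the only thing to be careful about is applying Lemmas \ref{3.1} and \ref{r} only on the a priori interval of existence before using the conclusion to extend to $[0,\infty)$, which is automatic once the solution is shown to be constant in $t$.
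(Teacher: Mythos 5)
Your proof is correct and takes essentially the same route as the paper's: the paper's one-line proof invokes exactly Lemmata \ref{3.1} and \ref{r}, and your argument just spells out the intended details — the sandwich $0\le\tilde r(t)\le\tilde r(0)=0$, the evolution identity $\frac{d\tilde r}{dt}=-n\int_M(\tilde R-\tilde r)^2\,dV_{\tilde\theta}$ forcing $\tilde R\equiv 0$, and hence stationarity of the flow. Nothing further is needed.
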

\begin {proof}
This is a direct consequence of Lemmata \ref{3.1} and \ref{r}.
\end {proof}
Now we prove the following theorem.
\begin {thm} \label {3.2}
Under the condition of Theorem \ref{main}, for any $T>0$, there exists a constant $C(T)$, such that $u_{\min}(0)\le u(x,t)\leq C(T)$ for $t\in [0,T]$.
\end {thm}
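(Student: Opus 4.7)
The plan is to reduce the problem to a straightforward application of the maximum principle to a single equation for $u$. First I would rewrite the flow as a scalar equation. Since $R\equiv 0$ for $\theta_0$, the CR Yamabe equation \eqref{CRYE} gives $\tilde{R} = -\tfrac{2(n+1)}{n}\, u^{-1-2/n}\,\triangle_b u$. Combining this with the evolution law $\partial_t u = -\tfrac{n}{2}(\tilde R-\tilde r)u$ yields
\begin{equation*}
\frac{\partial u}{\partial t} \;=\; (n+1)\,u^{-2/n}\,\triangle_b u \;+\; \tfrac{n}{2}\,\tilde r(t)\,u.
\end{equation*}
By Lemma \ref{3.1} we have $\tilde r(t)\geq 0$, and by Lemma \ref{r} the function $\tilde r$ is non-increasing, so $0\le\tilde r(t)\le\tilde r(0)$ on $[0,T]$. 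In particular the coefficient $\tfrac{n}{2}\tilde r(t)$ is a bounded, non-negative function of $t$ only.

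For the lower bound, I would apply the parabolic maximum principle at a spatial minimum $x_t\in M$ of $u(\cdot,t)$. At such a point $\triangle_b u(x_t,t)\geq 0$, so the equation above gives
\begin{equation*}
\frac{d}{dt}\,u_{\min}(t) \;\geq\; \tfrac{n}{2}\,\tilde r(t)\,u_{\min}(t) \;\geq\; 0,
\end{equation*}
and integrating yields $u_{\min}(t)\geq u_{\min}(0)$ for all $t\in[0,T]$. For the upper bound, the same argument at a spatial maximum gives $\triangle_b u\leq 0$, hence
\begin{equation*}
\frac{d}{dt}\,u_{\max}(t)\;\leq\;\tfrac{n}{2}\,\tilde r(t)\,u_{\max}(t)\;\leq\;\tfrac{n}{2}\,\tilde r(0)\,u_{\max}(t),
\end{equation*}
so Gronwall's inequality produces $u_{\max}(t)\leq u_{\max}(0)\,e^{\frac{n}{2}\tilde r(0)\,t}\leq C(T)$ on $[0,T]$, as required.

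The main obstacle is making the maximum-principle steps rigorous for the sub-elliptic operator $\triangle_b$: $u_{\min}(t)$ and $u_{\max}(t)$ need not be $C^1$, and $\triangle_b$ is only hypoelliptic (not uniformly elliptic). I would handle the regularity of the envelopes by Hamilton's trick, i.e.\ show that $u_{\min}$ and $u_{\max}$ are Lipschitz in $t$ and compute their upper/lower Dini derivatives by selecting any point achieving the minimum/maximum, where the spatial derivatives vanish and the sub-Laplacian has the appropriate sign. The sub-elliptic maximum principle itself is standard on compact pseudo-Hermitian manifolds because at a spatial minimum (respectively maximum) the admissible frame vectors $X_\alpha$ all annihilate $u$ and $X_\alpha X_\alpha u\ge 0$ (resp.\ $\le 0$), yielding $\triangle_b u\ge 0$ (resp.\ $\le 0$) pointwise. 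With those ingredients the two one-sided differential inequalities above close the argument.
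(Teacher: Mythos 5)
Your proof is correct and follows essentially the same route as the paper: the paper applies the maximum principle to the evolution equation for $u^{\frac{n+2}{n}}$ (taking the infimum/supremum over the set of points realizing the extremum, which is the same Hamilton-type device you invoke), uses $\tilde r\ge 0$ from Lemma \ref{3.1} for the lower bound, and the monotonicity of $\tilde r$ for the Gronwall upper bound $u_{\max}(0)e^{\frac{n}{2}\tilde r(0)t}$. Working with $u$ directly instead of $u^{\frac{n+2}{n}}$ is only a cosmetic difference, since the extremal points coincide and the resulting differential inequalities are equivalent.
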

\begin {proof}
Since $M$ is compact, we denote $x(t)$ to be the set of points in $M$ where $u_{\min}(t)$ is obtained. Then we have
\begin{eqnarray*}
\frac{du_{\min}^{\rr}}{dt}(t) &\geq &  \inf \{\pp(u^{\rr})(x,t):x\in x(t)\}\\
 &=& \inf \{\frac{(n+2)(n+1)}{n}(\ss u+\frac{n}{2n+2}\aa u^{\rr}(t)):x\in x(t)\}\\
 &\geq & \ee \aa \um^{\rr}(t)\\
 &\geq & 0,
\end{eqnarray*}
which means
$$\um(t)\geq \um (0).$$
Similarly we get
$$\frac{d\umm ^{\rr}}{dt}(t)\leq \ee \aa \umm ^{\rr}(t)\leq \ee \aa(0) \umm ^{\rr}(t).$$
Therefore, we can obtain
$$\um(0)\leq u(x,t)\leq \umm (0)e^{\frac{n}{2}\aa(0)t}.$$
\end {proof}

\begin {thm}
Under the condition of Theorem \ref {main}, for any $T>0$, there exists a constant $C>0$ independent of $T$ such that
$$\frac{1}{C}\leq u(x,t) \leq C,$$
for any $t\in [0,T]$.
\end {thm}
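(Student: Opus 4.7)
The lower bound $u(x,t)\ge u_{\min}(0)>0$ is already uniform in $T$ by Theorem~\ref{3.2}, so only a $T$-independent upper bound remains. My plan is a Moser-type iteration driven by evolving $\|u\|_{L^p(M,\theta_0)}$ along the flow, using the a priori inputs: volume preservation (Lemma~\ref{v}), the monotonicity and nonnegativity of $\tilde r(t)$ (Lemmas~\ref{r} and \ref{3.1}), the CR Sobolev embedding (Proposition~\ref{Theorem 2.1}), and the CR Gagliardo--Nirenberg inequality from the appendix.

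First I would combine the pointwise evolution $\partial_t u=-\frac{n}{2}(\tilde R-\tilde r)u$ with the CR Yamabe equation (in which $R\equiv 0$ since $\theta_0$ has flat pseudo-Hermitian scalar curvature) to obtain
\[
\partial_t u=(n+1)\,u^{-2/n}\,\triangle_b u+\frac{n}{2}\tilde r(t)\,u.
\]
Multiplying by $p\,u^{p-1}$ and integrating by parts on the closed manifold $M$ gives, for every $p>1+2/n$,
\[
\frac{d}{dt}\int_M u^p\,dV_{\theta_0}=-\frac{4p(n+1)(p-1-2/n)}{(p-2/n)^2}\int_M|\nabla_{\theta_0}v|^2\,dV_{\theta_0}+\frac{np}{2}\tilde r(t)\int_M u^p\,dV_{\theta_0},
\]
where $v:=u^{(p-2/n)/2}$. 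Because $\tilde r(t)\in[0,\tilde r(0)]$, the zeroth-order term is uniformly controlled in $t$.

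Next I would feed this into a Moser-type scheme. Applying the CR Sobolev embedding to $v$ controls $(\int v^{2(n+1)/n})^{n/(n+1)}$ by $\int|\nabla v|^2+\int v^2$, and the Gagliardo--Nirenberg inequality interpolates $\int v^2=\int u^{p-2/n}$ between $\int u^p$ and the fixed quantity $\int u^{2+2/n}=V_0$. The outcome, after a H\"older step, is a differential inequality of the form
\[
\frac{d}{dt}\int_M u^p\,dV_{\theta_0}\le -\alpha_p\Bigl(\int_M u^p\,dV_{\theta_0}\Bigr)^{1+\sigma_p}+\beta_p
\]
with explicit positive constants $\alpha_p,\beta_p,\sigma_p$ depending on $p,n,V_0$, and $\tilde r(0)$ but not on $T$. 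A routine ODE comparison then yields a $T$-independent bound on $\|u(\cdot,t)\|_{L^p(\theta_0)}$ in terms of $\|u(\cdot,0)\|_{L^p(\theta_0)}$. Iterating along a geometric sequence $p_k\to\infty$ and tracking how $\alpha_{p_k},\beta_{p_k}$ depend on $k$ in the standard Moser fashion promotes this to a uniform $L^\infty$ estimate.

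The main obstacle is the balance of the $p$-dependent constants: the dissipative gradient coefficient stabilises at $4(n+1)$ as $p\to\infty$, while the coefficient $\tfrac{np}{2}\tilde r(t)$ of the zeroth-order term grows linearly in $p$, so a naive energy argument does not close. The super-linear exponent $\sigma_p>0$ supplied by the Gagliardo--Nirenberg inequality is precisely what forces the dissipative term to dominate once $\int u^p$ is large; verifying that the resulting iteration converges---i.e., that the quantities $(\log\alpha_{p_k})/p_k$ and $(\log\beta_{p_k})/p_k$ remain bounded---is the technical crux. Combined with the already-established lower bound, this completes the proof.
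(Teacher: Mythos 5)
Your lower bound and your computation of $\frac{d}{dt}\int_M u^p\,dV_{\theta_0}$ are correct, but the upper-bound scheme breaks down at exactly the point you flag as the ``technical crux'': the superlinear exponent $\sigma_p>0$ is not actually available. The only integral quantity controlled a priori is the \emph{critical} volume $\int_M u^{2+2/n}\,dV_{\theta_0}=V_0$ (Lemma \ref{v}). Running the Sobolev step for $v=u^{(p-2/n)/2}$ gives $\|v\|_{L^{2(n+1)/n}}^2=\bigl(\int_M u^{a}\,dV_{\theta_0}\bigr)^{n/(n+1)}$ with $a=(p-\frac 2n)\frac{n+1}{n}$, and when you interpolate $\int_M u^p$ between the exponents $a$ and $b=2+\frac 2n$ you find $a-b=\frac{n+1}{n}(p-b)$, so the H\"older exponent is $s=(a-b)/(p-b)=\frac{n+1}{n}$ \emph{independently of $p$}; this yields $\|v\|_{L^{2(n+1)/n}}^2\ge V_0^{-1/(n+1)}\int_M u^p\,dV_{\theta_0}$, which is exactly linear in $\int_M u^p$, i.e.\ $\sigma_p=0$. (Anchoring the interpolation at a subcritical norm only lowers the exponent further, and a supercritical anchor is what you are trying to bound.) Hence the dissipative term is $-c_p\int_M u^p\,dV_{\theta_0}$ with $c_p$ stabilising at a constant as $p\to\infty$, while the forcing term carries the coefficient $\frac{np}{2}\tilde r(t)$, which grows linearly in $p$; if $\tilde r(0)>0$ the ODE comparison only returns $\|u(\cdot,t)\|_{L^p}\lesssim e^{\frac n2\tilde r(0)t}\|u(\cdot,0)\|_{L^p}$, which is the $T$-dependent bound of Theorem \ref{3.2} all over again, and the Moser iteration does not close. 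This is the standard critical-exponent obstruction for Yamabe-type flows, and it cannot be circumvented with the information available at this stage ($\tilde r(t)\to 0$ is only established \emph{after} the uniform bounds).

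The paper sidesteps all of this with an elementary pointwise argument in the spirit of Ye: since $\tilde r\ge 0$ (Lemma \ref{3.1}), the evolution inequalities $\frac{d}{dt}u_{\min}^{(n+2)/n}\ge\frac{n+2}{2}\tilde r\,u_{\min}^{(n+2)/n}$ and $\frac{d}{dt}u_{\max}^{(n+2)/n}\le\frac{n+2}{2}\tilde r\,u_{\max}^{(n+2)/n}$ show that the ratio $u_{\max}(t)/u_{\min}(t)$ is non-increasing, while volume conservation forces $u_{\min}(t)\le\bigl(\vol(M,\theta)/\vol(M,\theta_0)\bigr)^{n/(2n+2)}$; combining the two gives the uniform upper bound $u_{\max}(t)\le\frac{u_{\max}(0)}{u_{\min}(0)}\bigl(\vol(M,\theta)/\vol(M,\theta_0)\bigr)^{n/(2n+2)}$. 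If you want an integral route you would need a genuinely subcritical conserved or monotone quantity, which this flow does not supply.
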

\begin {proof}
First we show that the function $f(t):=(\frac{\umm(t)}{\um(t)})^{\rr}$ is non-increasing. In fact, for any $h>0$, we have
\begin{eqnarray*}
\frac{f(t+h)-f(t)}{h} &=&  \frac{1}{h}(\frac{\umm ^{\rr}(t+h)}{\um ^{\rr}(t+h)}-\frac{\umm^{\rr}(t)}{\um^{\rr}(t)})\\
&=& \frac {1}{\um^{\rr}(t+h)}\frac{\umm^{\rr}(t+h)-\umm^{\rr}(t)}{h}\\
& &-\frac{\umm^{\rr}(t)}{\um^{\rr}(t+h)\um^{\rr}(t)}\frac{\um^{\rr}(t+h)-\um^{\rr}(t)}{h}.
\end{eqnarray*}
Thus we have
\begin{eqnarray*}
& &\lim \limits_{h\rightarrow 0}\sup \frac{f(t+h)-f(t)}{h}\\
&=&\lim \limits_{h\rightarrow 0}\sup (
 \frac {1}{\um^{\rr}(t+h)}\frac{\umm^{\rr}(t+h)-\umm^{\rr}(t)}{h}\\
 & &-\frac{\umm^{\rr}(t)}{\um^{\rr}(t+h)\um^{\rr}(t)}\frac{\um^{\rr}(t+h)-\um^{\rr}(t)}{h})\\
 &\leq &\lim \limits_{h\rightarrow 0}\sup  \frac {1}{\um^{\rr}(t+h)}\frac{\umm^{\rr}(t+h)-\umm^{\rr}(t)}{h}\\
 & &-\lim \limits_{h\rightarrow 0}\inf \frac{\umm^{\rr}(t)}{\um^{\rr}(t+h)\um^{\rr}(t)}\frac{\um^{\rr}(t+h)-\um^{\rr}(t)}{h}\\
 &\leq & \frac{1}{\um^{\rr}(t)}\frac{d\umm^{\rr}}{dt}(t)-\frac{\umm ^{\rr}(t)}{(\um^{\rr}(t))^2}\frac{d\um^{\rr}}{dt}(t)\\
 &\leq &  \frac{1}{\um^{\rr}(t)}\ee \aa \umm^{\rr}(t)-\frac{\umm ^{\rr}(t)}{(\um^{\rr}(t))^2}\ee \aa \um^{\rr}(t)\\
 &=& 0.
\end{eqnarray*}
Then we get
\begin {equation} \label {2}
\frac{\umm(t)}{\um(t)}\leq \frac{\umm(0)}{\um(0)}.
\end {equation}
It has been shown in Lemma \ref{v} that the volume is invariant under the CR Yamabe flow. We therefore have
$$\vol(M,\th)=\tt u^{\ww}\VVV\geq \um ^{\ww}\vol(M,\th_0),$$
thus
$$\um(t)\leq (\frac{\vol(M,\th)}{\vol(M,\th_0)})^{\frac{n}{2n+2}}.$$
Putting these together, we obtain
$$\umm(t)\leq \frac{\umm(0)}{\um(0)}(\frac{\vol(M,\th)}{\vol(M,\th_0)})^{\frac{n}{2n+2}}.$$
\end {proof}

Once we get the $C^0$ estimate of $u(x,t)$, we may use the same argument in \cite{HSW}(page 12) to show all higher order derivatives of $u(x,t)$ are uniformly bounded on $[0,\infty)$. Then $u(t)$ converges to a smooth function $u_{\infty}$ as $t\rightarrow \infty$. Next we show that $u(t)$ converges to a smooth function $u_{\infty}$ at an exponential rate. Actually, we will show that $u_{\infty}$ is a constant. We first prove the following lemma.
\begin {lemma}
Under the condition of Theorem \ref {main}, $\aa\rightarrow 0$ as $t\rightarrow \infty$.
\end {lemma}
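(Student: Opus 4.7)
The plan is to show that $\tilde r(t)$ tends monotonically to a limit $\tilde r_\infty \geq 0$ and then rule out $\tilde r_\infty > 0$ via a subsequential smooth limit of $u(\cdot,t)$ combined with the maximum principle applied to the resulting CR Yamabe equation.

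First, by Lemma \ref{3.1} and Lemma \ref{r}, $t \mapsto \tilde r(t)$ is non-increasing and bounded below by $0$, so $\tilde r_\infty := \lim_{t\to\infty} \tilde r(t) \geq 0$ exists. Integrating
\[
\frac{d\tilde r}{dt}=-n\int_M(\tilde R-\tilde r)^2\,dV_{\tilde\theta}
\]
in $t$ yields $n\int_0^\infty\!\int_M(\tilde R-\tilde r)^2\,dV_{\tilde\theta}\,dt = \tilde r(0)-\tilde r_\infty <\infty$. Together with the fact (inherited from the uniform $C^k$ bounds on $u$ on $[0,\infty)$ recalled just before the lemma) that the $t$-derivative of $\int_M(\tilde R-\tilde r)^2 dV_{\tilde\theta}$ is uniformly bounded, a standard calculus fact forces
\[
\int_M(\tilde R-\tilde r)^2\,dV_{\tilde\theta}\longrightarrow 0\quad\text{as}\quad t\to\infty.
\]

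Suppose for contradiction that $\tilde r_\infty>0$. Pick any sequence $t_j\to\infty$. The uniform $C^k$ bounds together with the uniform positive lower bound on $u$ from Theorem \ref{3.2} and its successor yield, via Arzel\`a--Ascoli and a diagonal extraction, a subsequence along which $u(\cdot,t_j)\to u_\infty$ smoothly, with $u_\infty>0$. The Webster scalar curvatures $\tilde R(\cdot,t_j)$ then converge smoothly to the scalar curvature of $\tilde\theta_\infty:=u_\infty^{2/n}\theta_0$, and passing to the limit in the $L^2$-decay above forces this limiting scalar curvature to equal the constant $\tilde r_\infty$ identically. Using $R_{\theta_0}\equiv 0$, the CR Yamabe equation \eqref{CRYE} for $u_\infty$ then reduces to
\[
-\bigl(2+\tfrac{2}{n}\bigr)\Delta_b u_\infty = \tilde r_\infty\, u_\infty^{1+\frac{2}{n}}.
\]
Evaluating at a point $p\in M$ where $u_\infty$ attains its (strictly positive) minimum gives $\Delta_b u_\infty(p)\ge 0$, since the horizontal Hessian is positive semidefinite there; the left-hand side is therefore $\le 0$, whereas the right-hand side equals $\tilde r_\infty\, u_\infty(p)^{1+2/n}>0$, a contradiction. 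Hence $\tilde r_\infty=0$.

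The main obstacle I expect is the upgrade from time-integrability of $\int_M(\tilde R-\tilde r)^2 dV_{\tilde\theta}$ to its pointwise-in-time decay; this rests on a uniform bound for the time derivative of that integral and is the very reason the higher-order uniform estimates for $u$ from the previous part of the section are needed. The subsequential smooth compactness and the one-line maximum-principle contradiction that follows are then soft.
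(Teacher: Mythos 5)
Your argument is correct, but it is a genuinely different (and considerably heavier) route than the one the paper takes. The paper's proof is a two-line ODE comparison: if $\tilde{r}$ did not tend to $0$, then by Lemma \ref{r} and Lemma \ref{3.1} it would satisfy $\tilde{r}(t)\ge C>0$ for all $t$, and feeding this into the differential inequality from the proof of Theorem \ref{3.2}, namely $\frac{d}{dt}u_{\min}^{\frac{n+2}{n}}(t)\ge \frac{n+2}{2}\,\tilde{r}(t)\,u_{\min}^{\frac{n+2}{n}}(t)$, forces $u_{\min}$ to grow exponentially, contradicting the uniform upper bound on $u$ established in the preceding theorem. You instead integrate $\frac{d\tilde r}{dt}=-n\int_M(\tilde R-\tilde r)^2dV_{\tilde\theta}$, upgrade time-integrability to pointwise decay via a Barbalat-type argument (legitimately available, since the uniform higher-order estimates are stated just before the lemma and the evolution equations convert them into bounds on time derivatives), and then run a subsequential compactness plus maximum-principle argument on the limiting CR Yamabe equation. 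Each step checks out; in particular $\Delta_b u_\infty(p)\ge 0$ at an interior minimum is valid since the sub-Laplacian is a horizontal trace of the Hessian plus terms vanishing with the gradient, though you could shortcut the last step by simply integrating $-(2+\frac{2}{n})\Delta_b u_\infty=\tilde r_\infty u_\infty^{1+\frac{2}{n}}$ over $M$ to get $\tilde r_\infty\int_M u_\infty^{1+\frac{2}{n}}dV_{\theta_0}=0$. What your approach buys is more information (it identifies subsequential limits as solutions of the limiting Yamabe equation, which is closer in spirit to how one would handle the cases $\lambda(M)\neq 0$); what it costs is reliance on the full strength of the higher-order uniform estimates and a compactness extraction, where the paper's argument needs only the $C^0$ bounds already in hand.
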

\begin {proof}
If $\aa\geq C>0$, for some positive constant $C$, then from the proof of Theorem \ref{3.2}, we get
\begin{eqnarray*}
\frac{du_{\min}^{\rr}}{dt}(t)&\geq & \ee \aa \um^{\rr}(t)\\
 &\geq &C\cdot\ee\cdot\um^{\rr}(t).
\end{eqnarray*}
Thus
$$\um ^{\rr}(t)\geq e^{\ee Ct}\um^{\rr}(0).$$
But this contradicts with Theorem 3.2. Therefore we have $\aa\rightarrow 0$ as $t\rightarrow \infty$.
\end {proof}
Next we show that the convergence is exponential.
\begin {lemma}
Under the condition of Theorem \ref {main}, the pseudo-Hermitian scalar curvature $\aa(t)\rightarrow 0$ exponentially as $t\rightarrow \infty$.
\end {lemma}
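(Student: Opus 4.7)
The plan is to derive a differential inequality of the form $\frac{d\aa}{dt}\le -c\,\aa$ valid for all $t$ sufficiently large; integrating this yields exponential decay. Using Lemma 2.2(3) and the conservation of volume $V=\mathrm{Vol}(M,\tilde\theta)$ from Lemma~\ref{v}, the evolution of $\aa$ factors as
\begin{equation*}
\frac{d\aa}{dt}=-n\int_M(\tilde R-\aa)^2\, dV_{\tilde\theta}=-n\int_M\tilde R^2\, dV_{\tilde\theta}+nV\aa^2,
\end{equation*}
so the problem reduces to proving the lower bound $\int_M \tilde R^2\, dV_{\tilde\theta}\ge C\,\aa$.

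The key input is the scalar-flatness $R\equiv 0$ of $\theta_0$. Under this, the CR Yamabe equation \eqref{CRYE} simplifies to $\tilde R\,u^{1+2/n}=-(2+2/n)\triangle_b u$. Integrating against $dV_{\theta_0}$ and using the integration by parts defining $\triangle_b$ yields
\begin{equation*}
V\aa=\int_M\tilde R\,dV_{\tilde\theta}=(2+2/n)\int_M|\nabla_{\theta_0}u|^2\,dV_{\theta_0},
\end{equation*}
while squaring the pointwise identity and integrating against $dV_{\tilde\theta}=u^{2+2/n}dV_{\theta_0}$ gives
\begin{equation*}
\int_M\tilde R^2\,dV_{\tilde\theta}=(2+2/n)^2\int_M u^{-2/n}(\triangle_b u)^2\, dV_{\theta_0}.
\end{equation*}
Since the preceding theorem provides uniform bounds $1/C\le u\le C$, the weight $u^{-2/n}$ is bounded below, so combining with the global Poincar\'e-type inequality of Theorem~\ref{p5}, namely $\|\nabla_{\theta_0}u\|_{L^2}^2\le C\|\triangle_b u\|_{L^2}^2$, I obtain
\begin{equation*}
\int_M\tilde R^2\,dV_{\tilde\theta}\ge c_1\int_M(\triangle_b u)^2\, dV_{\theta_0}\ge c_2\int_M|\nabla_{\theta_0}u|^2\, dV_{\theta_0}=c_3\,\aa.
\end{equation*}

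Substituting this lower bound into the evolution of $\aa$ gives $\frac{d\aa}{dt}\le -\aa\,(nc_3-nV\aa)$. Since $\aa\to 0$ by the previous lemma, I fix $T_0$ large enough so that $nV\aa(t)\le nc_3/2$ for all $t\ge T_0$; then $\frac{d\aa}{dt}\le -(nc_3/2)\aa$ on $[T_0,\infty)$, and integrating yields $\aa(t)\le \aa(T_0)e^{-(nc_3/2)(t-T_0)}$, the claimed exponential decay.

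The main obstacle, and the only nontrivial analytic step, is the lower bound $\int_M\tilde R^2\, dV_{\tilde\theta}\ge C\,\aa$; this is precisely what Theorem~\ref{p5} was set up to provide, since $R\equiv 0$ converts $\tilde R$ into a sub-Laplacian of $u$, and the rest is the Poincar\'e-type control of $\|\nabla u\|_{L^2}$ by $\|\triangle_b u\|_{L^2}$. Everything else is algebraic manipulation leveraging $R\equiv 0$, the uniform $C^0$ bounds on $u$ from the preceding theorem, and the fact $\aa\to 0$ from the previous lemma.
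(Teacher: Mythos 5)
Your proof is correct, and it is essentially equivalent to the paper's, though the entry point differs. The paper differentiates the Dirichlet energy $E(t)=\int_M|\nabla_{\theta_0}u|^2\,dV_{\theta_0}$ directly from the evolution equation of $u$ (multiplying $\partial_t u=(n+1)u^{-2/n}\triangle_b u+\tfrac{n}{2}\tilde r u$ by $\triangle_b u$ and integrating by parts), obtains $\tfrac{1}{n+1}\tfrac{d}{dt}E\le(\tfrac{n}{n+1}\tilde r-2C)E$ via Theorem~\ref{p5} and the uniform $C^0$ bounds, and only at the very end converts the exponential decay of $E$ into that of $\tilde r$ through the identity $V\tilde r=(2+\tfrac2n)E$, which holds because $R\equiv0$. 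You instead start from the monotonicity formula for $\tilde r$ in Lemma~2.2(3) and use the same identity at the outset to turn $\int_M\tilde R^2\,dV_{\tilde\theta}=(2+\tfrac2n)^2\int_M u^{-2/n}(\triangle_b u)^2\,dV_{\theta_0}$ into a lower bound $c_3\tilde r$. Since $\tilde r$ and $E$ are proportional with a time-independent constant, the two differential inequalities are the same inequality in different variables; the key analytic inputs --- scalar-flatness of $\theta_0$, the Poincar\'e-type inequality of Theorem~\ref{p5}, the uniform two-sided $C^0$ bounds on $u$, and the previous lemma $\tilde r\to0$ to absorb the quadratic (respectively, $\tilde r E$) term --- are identical. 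One minor remark: as stated in the paper, Lemma~2.2(3) appears to omit a factor $1/V$ in $\frac{d\tilde r}{dt}=-\frac{n}{V}\int_M(\tilde R-\tilde r)^2\,dV_{\tilde\theta}$ unless the volume is normalized; this only changes your constants and does not affect the argument. Your packaging has the small advantage of producing the decay of $\tilde r$ directly, whereas the paper needs the extra identity (3.5) at the end; conversely, the paper's version also delivers the decay of $\|\nabla_{\theta_0}u\|_{L^2}^2$, which it explicitly records for later use, so if you adopt your route you should still note that the decay of $E$ follows from that of $\tilde r$ by the same proportionality.
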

\begin {proof}
Since $$\pp u=(n+1)\ss u\cdot u^{-\qq}+\frac{n}{2}\aa u,$$
we have
$$\frac{1}{n+1}\pp u=\ss u\cdot u^{-\qq}+\frac{n}{2n+2}\aa u,$$
and
$$\frac{1}{n+1}\pp u \cdot\ss u=(\ss u)^2\cdot u^{-\qq}+\frac{n}{2n+2}\aa u\cdot\ss u.$$
Integrating both sides of the above equality over $M$, we have
$$\frac{1}{n+1}\tt \pp u \cdot\ss u\VVV=\tt(\ss u)^2\cdot u^{-\qq}\VVV+\frac{n}{2n+2}\aa \tt u\cdot\ss u\VVV.$$
Since
\begin{eqnarray*}
\frac{1}{n+1}\tt \pp u \cdot\ss u\VVV &=& -\frac{1}{n+1} \tt \nn u\cdot \nn (\pp u)\VVV \\
 &= & -\frac{1}{2n+2}\tt \pp |\nn u|^2 \VVV \\
 &=& -\frac{1}{2n+2}\dd \tt |\nn u|^2 \VVV,
\end{eqnarray*}
then we get
\begin{equation}\label{3}
 \frac{1}{n+1}\dd \tt |\nn u|^2 \VVV=-2\tt(\ss u)^2\cdot u^{-\qq}\VVV+\frac{n}{n+1}\aa \tt |\nn u|^2\VVV.
\end{equation}
By Theorem \ref{p5}, we have
\begin{equation}\label{4}
\parallel\nn u\parallel _{L^2(M,\th_0)}\leq C \parallel\ss u\parallel_{L^2(M,\th_0)}.
\end{equation}
Here $C$ is some positive constant independent of $u$.
By \eqref {4}, we have
\begin{eqnarray*}
\tt(\ss u)^2\cdot u^{-\qq}\VVV&\geq & \frac{1}{\umm^{\qq}}\tt(\ss u)^2\VVV \\
&\geq & C\tt |\nn u|^2 \VVV,
\end{eqnarray*}
for some positive constant $C$. Substituting this inequality into \eqref {3}, we get
$$\frac{1}{n+1}\dd \tt |\nn u|^2 \VVV\leq (\frac{n}{n+1}\aa -2C)\tt |\nn u|^2 \VVV.$$
Then for sufficiently large $t$, there exists a positive constant $A$, such that
$$\dd \log \tt |\nn u|^2 \VVV\leq (n+1)(\frac{n}{n+1}\aa -2C)\leq -A,$$
from which we get
\begin{equation}\label{5}
\aa(t)=\frac{\tt (\ww)|\nn u|^2\VVV}{\vol(M,\th)}\leq C\cdot e^{-At},
\end{equation}
for $t$ sufficiently large.
\end {proof}
From the proof of Lemma 3.3, we also get
$$ \|\nabla_{\th_0}u\|^2_{L^2(M, \th_0)}\le C\cdot e^{-At}, $$
which will be used later.

Now we prove the following theorem:
\begin {thm}
Under the condition of Theorem \ref {main}, the solution u(t) of the CR Yamabe flow \eqref{CRYF} converges to a constant at an exponential rate.
\end {thm}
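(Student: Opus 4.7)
The plan is to convert the exponential decay of $\|\nn u\|_{L^2(M,\th_0)}^2$ (established in the proof of the previous lemma, and noted immediately after it) into exponential $C^\infty$ convergence of $u(t)$ to a constant. The route is: (i) global Poincar\'e gives exponential $L^2$ decay of $u(t)-\bar u(t)$, where $\bar u(t)$ is the $\th_0$-average of $u(t)$; (ii) the CR Gagliardo--Nirenberg interpolation (from the appendix), together with the uniform $C^k$ bounds of $u(t)$ obtained by the argument of \cite{HSW}, upgrades this to exponential decay of $u(t)-\bar u(t)$ in $C^0$ and in higher norms; (iii) volume conservation (Lemma \ref{v}) forces $\bar u(t)$ to converge exponentially to an explicit constant.

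Concretely, first I would apply Corollary \ref{p3} to $u(t)$ to obtain
\[
\|u(t)-\bar u(t)\|_{L^2(M,\th_0)}^{2} \le C\,\|\nn u(t)\|_{L^2(M,\th_0)}^{2} \le Ce^{-At}.
\]
Then, using the appendix's Gagliardo--Nirenberg interpolation, for $k$ large enough and some $\alpha=\alpha(n,k)\in(0,1)$,
\[
\|u(t)-\bar u(t)\|_{C^0(M)} \le C\,\|u(t)-\bar u(t)\|_{L^2(M,\th_0)}^{\alpha}\,\|u(t)-\bar u(t)\|_{C^k(M)}^{1-\alpha} \le C'e^{-\alpha A t/2},
\]
so $u(t)-\bar u(t)\to 0$ exponentially in $C^0$.

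Next, to control $\bar u(t)$, I would exploit the volume invariant $V:=\tt u^{\rr}\VVV=\vol(M,\tilde\th(t))$ from Lemma \ref{v}. Taylor expanding $u^{\rr}$ about $\bar u(t)$, using the uniform two-sided bounds on $u$, and noting that $\tt(u-\bar u)\VVV=0$ by the definition of $\bar u$, one obtains
\[
V=\bar u(t)^{\rr}\,\vol(M,\th_0)+O\bigl(\|u(t)-\bar u(t)\|_{C^0(M)}^{2}\bigr)=\bar u(t)^{\rr}\,\vol(M,\th_0)+O(e^{-\alpha A t}).
\]
Hence $\bar u(t)\to c_\infty:=\bigl(V/\vol(M,\th_0)\bigr)^{n/(n+2)}$ at an exponential rate, and the triangle inequality gives $\|u(t)-c_\infty\|_{C^0(M)}\le Ce^{-\beta t}$ for some $\beta>0$. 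One final interpolation between this exponential $C^0$ decay and the uniform $C^{k+1}$ bound yields exponential convergence in $C^k$ for every $k$, which is the $C^\infty$ exponential convergence asserted in the theorem.

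The main obstacle is step (ii): the transition from the $L^2$ gradient information furnished by the Poincar\'e inequality to the pointwise/$C^k$ information needed both to linearize the nonlinear constraint $\tt u^{\rr}\VVV=V$ (so that the linear term in $u-\bar u$ can be killed by orthogonality) and to pin down the limit in $C^\infty$. This is precisely why the CR Gagliardo--Nirenberg inequality of the appendix is indispensable; once it is available, the Taylor-expansion and bootstrap steps are essentially routine, and only elementary estimates remain.
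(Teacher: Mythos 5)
Your argument reaches the paper's conclusion, but it identifies the limiting constant by a genuinely different mechanism. The paper never decomposes $u$ itself: it sets $g(t)=\tt u^{\rr}\VVV$, computes $\frac{d}{dt}g=\ee\,\aa\, g$ directly from the flow equation (the $\triangle_b$ term integrates to zero), deduces from $\aa\ge 0$ and $\aa\le Ce^{-At}$ that $g$ is non-decreasing, bounded, and converges exponentially to some limit $L$, and then applies the global Poincar\'e inequality to $u^{\rr}$ (not to $u$) to show that $u^{\rr}$ is exponentially close in $L^2$ to $g(t)/V\to L/V$; the appendix's Gagliardo--Nirenberg inequality together with the embedding $H^k\hookrightarrow C^0$ then upgrades this to a pointwise bound. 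You instead apply Poincar\'e to $u-\bar u$ and pin down $\lim_{t\to\infty}\bar u(t)$ via the conserved volume and a Taylor expansion whose linear term is killed by the orthogonality $\tt (u-\bar u)\,\VVV=0$. Your route yields an explicit limit determined by the initial volume, whereas the paper's $L$ is only characterized as a monotone limit; the price is the extra linearization step. Both are legitimate, and your final $C^k$ statement is somewhat stronger than the $C^0$ conclusion the paper actually extracts.

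One concrete error must be corrected: the quantity conserved by Lemma \ref{v} is $\vol(M,\tilde\th(t))=\tt u^{\ww}\VVV$ with exponent $\ww$, not $\tt u^{\rr}\VVV$ with exponent $\rr$ as you wrote. The latter is \emph{not} constant along the flow; the paper computes $\frac{d}{dt}\tt u^{\rr}\VVV=\ee\,\aa\tt u^{\rr}\VVV\ge 0$, which vanishes only where $\aa=0$. With the exponent corrected your expansion reads
$$V=\bar u(t)^{\ww}\,\vol(M,\th_0)+O\bigl(\|u-\bar u\|_{L^2(M,\th_0)}^{2}\bigr),$$
where in fact the $L^2$ decay already suffices (the quadratic remainder is uniformly controlled by the two-sided bounds on $u$, so the $C^0$ decay of step (ii) is not needed for this step), and the limit constant is $(V/\vol(M,\th_0))^{\frac{n}{2n+2}}$ rather than $(V/\vol(M,\th_0))^{\frac{n}{n+2}}$, consistent with the bound on $\um(t)$ in the paper's Theorem 3.3. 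Everything else in your argument then goes through.
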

\begin {proof}
Since
\begin{eqnarray*}
\dd \tt u^{\rr}\VVV &=& \tt \frac{d}{dt}(u^{\rr})\VVV\\
&=&\frac{(n+2)(n+1)}{n}\tt \ss u\VVV+\ee \aa \tt u^{\rr} \VVV\\
&=&\ee \aa\tt u^{\rr}\VVV \leq C\cdot e^{-At}\cdot\tt u^{\rr}\VVV,
\end{eqnarray*}
therefore
$\tt u^{\rr}\VVV$ is bounded from above and non-decreasing, which means
$$\lim \limits_{t\rightarrow \infty}\tt u^{\rr}(x,t)\VVV=L,$$
for some positive constant $L$.
Hence, there exists a constant $C$ such that
$$\dd \tt u^{\rr}\VVV \leq C\cdot e^{-At}.$$
Then for $t_2>t_1$, and $t_1$ sufficiently large, we have
\begin{eqnarray*}
|\tt u^{\rr}(x,t_2)\VVV-\tt u^{\rr}(x,t_1)\VVV|&=&\tt u^{\rr}(x,t_2)\VVV-\tt u^{\rr}(x,t_1)\VVV\\
&\leq&C(e^{-At_1}-e^{-At_2}).
\end{eqnarray*}
Let $t_2\rightarrow \infty$, we get
$$|\tt u^{\rr}\VVV-L|\leq C\cdot e^{-At}.$$
for $t$ sufficiently large. By Corollary \ref{p3} and H\"{o}lder inequality, we have
$$\parallel u^{\rr}-\frac{1}{V}\tt u^{\rr}\VVV\parallel^2\LL\leq C \parallel\nn u\parallel^2\LL\leq C\cdot e^{-At}.$$
Let $f=u^{\rr}-\frac{1}{V}\tt u^{\rr}\VVV$, then $\tt f \VVV=0$. We apply Theorem \ref{it1} in the Appendix below by choosing $a=\frac{1}{2}$, $p=q=r=2$, $j=k$ and $m=2k$, and use the fact that the higher order derivatives of $u$ are uniformly bounded for all $t\geq 0$, we get
$$\parallel u^{\rr}-\frac{1}{V}\tt u^{\rr}\VVV\parallel_{H^k(M, \th_0)}\leq C\cdot e^{-At}.$$
Then by Corollary \ref{FS2}, we obtain
$$| u^{\rr}-\frac{1}{V}\tt u^{\rr}\VVV| \leq C\cdot e^{-At}.$$
Let $t \rightarrow \infty$, we get $u^{\rr}\rightarrow \frac{L}{V}$ exponentially.
\end {proof}

\section{Appendix}
The Gagliardo-Nirenberg interpolation inequality is a result in the theory of Sobolev spaces that estimates the weak derivatives of a function. The estimates are in terms of $L^p$ norms of the function and its derivatives, and the inequality "interpolates" into various values of $p$ and orders of differentiation. The result is of particular importance in the theory of elliptic partial differential equations. It was proposed by Nirenberg and Gagliardo, see \cite{N}. For Riemannian case, the Gagliardo-Nirenberg type interpolation inequality was prove by Aubin (see \cite{A1}, Theorem 3.70).
Due to the lack of relevant references, we did not find the similar inequalities in CR geometry. In this section, we try to establish a Gagliardo-Nirenberg type inequality in CR geometry.

Let $(M,\th)$ be a smooth, strictly pseudoconvex $2n+1$ dimensional compact CR manifold without boundary. We choose an admissible coframe $\{\th^{\a}\}$ and dual frame $\{Z_{\a}\}$ for $T_{1,0}$. We adopt the same notations as in \cite{JL89}. Let $\alpha,\beta,\gamma,\cdots\in \{1,2\cdots, n\}$, and $a,b,c, \cdots\in \{1,2\cdots, 2n\}$, and $\bar{\a}=\a+n$. We denote $\nabla^{|j|}f$ the $j-$th covariant derivative of $f$ in the Tanaka-Webster connection in the sense
$$\parallel \nabla^{|j|}f\parallel^2=\nabla^{a_1}\nabla^{a_2}\cdots \nabla^{a_j} f\nabla_{a_1}\nabla_{a_2}\cdots \nabla_{a_j} f,$$
here $a_i\in \{1,2\cdots, 2n\}$ and $\nabla_{a_i}$ means $\nabla_{Z_{a_i}}$. From now on we denote $\parallel f \parallel_p$ be the $L^p$ norm of $f$.

By the existence of the Possion type equation $\triangle_b f=C$(see \cite{KN}). We denote $G_P(x)$ is the Green's function of the sub-Laplacian operator $\triangle_b$ which satisfies
$$\triangle_b G_P(X)=\delta_P(x)-\frac{1}{V},$$
where $V$ is the volume of $(M,\th)$, and $\delta_P(x)$ is the Dirac function at $P$. For the general case of the Green's function see \cite{CMY}.
By the definition of Dirac function, we have
\begin{equation}\label{G}
\varphi (P)=\frac{1}{V}\tt \f \VV + \tt G_P(x)\ss \f(x)\VV.
\end{equation}
We now prove the following theorem:
\begin{thm}\label{it1}
Let $(M,\th)$ be a smooth, strictly pseudoconvex $2n+1$ dimensional compact CR manifold without boundary. Let $q$, $r$ be real numbers $1\leq q,r < \infty$ and $j,m$ integers $0\leq j<m$. Then there exists a constant $K$ depending only on $n$, $m$, $j$, $q$, $r$ and $(M,\th_0)$, such that for all $f\in C^{\infty}$ with $\int f\ \VV=0$, we have:
$$\parallel \nabla^{|j|} f\parallel_p \leq K \parallel \nabla^{|m|} f \parallel_r^a \cdot \parallel f\parallel_q^{1-a}.$$
Here $\frac{1}{p}=\frac{j}{2n+2}+a(\frac{1}{r}-\frac{m}{2n+2})+(1-a)\frac{1}{q}$, for all $a$ in the interval $\frac{j}{m}\leq a <1$, for which $p$ is non-negative.
\end{thm}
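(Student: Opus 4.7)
My plan is to follow the classical Gagliardo--Nirenberg strategy, adapted to the sub-Riemannian setting of CR geometry: prove the inequality at two distinguished values of $a$---the Sobolev endpoint $a=1$ and the harmonic-mean endpoint $a=j/m$---and then recover the full range $j/m\le a<1$ by Hölder interpolation. The key technical input is the Green's function representation \eqref{G}, which under the hypothesis $\int_M f\,\VV=0$ collapses to
$$f(P)=\int_M G_P(x)\,\ss f(x)\,\VV.$$

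For the Sobolev endpoint $a=1$, I would exploit that $G_P$ has the Folland--Stein singularity $|G_P(x)|\le C\rho(x,P)^{-2n}$, where $\rho$ is the Carnot--Carath\'eodory distance and $Q=2n+2$ is the homogeneous dimension, so the above identity expresses $f$ as a Riesz potential of order $2$ applied to $\ss f$. The Folland--Stein Hardy--Littlewood--Sobolev inequality then yields $\|f\|_{p^*}\le C\|\ss f\|_r$ whenever $1/p^*=1/r-2/Q$. Iterating this together with the elliptic regularity from Lemma \ref{FS1}(3) and the CR Sobolev embedding of Proposition \ref{Theorem 2.1} produces, for every mean-zero $f$ and every $0\le j\le m$, the pure Sobolev-type bound
$$\|\nabla^{|j|}f\|_{p^*}\le C\,\|\nabla^{|m|}f\|_r,\qquad \frac{1}{p^*}=\frac{1}{r}-\frac{m-j}{Q}.$$

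For the harmonic-mean endpoint $a=j/m$, the target
$$\|\nabla^{|j|}f\|_p\le C\,\|\nabla^{|m|}f\|_r^{j/m}\,\|f\|_q^{1-j/m},\qquad \frac{1}{p}=\frac{j}{m}\cdot\frac{1}{r}+\frac{m-j}{m}\cdot\frac{1}{q},$$
should follow by repeated integration by parts of $\int_M|\nabla^{|j|}f|^p\,\VV$ against the Tanaka--Webster connection, re-expressing the integrand as a product of $\nabla^{|m|}f$ and powers of $f$, and then applying Hölder with the stated split. Finally, for arbitrary $a\in[j/m,1)$, the prescribed exponent $1/p=j/Q+a(1/r-m/Q)+(1-a)/q$ is a convex combination of the two endpoint exponents; writing $1/p=\lambda/p^*+(1-\lambda)/p|_{a=j/m}$ and applying Hölder to $|\nabla^{|j|}f|^p$ combines the two endpoint estimates, with the resulting weights on $\|\nabla^{|m|}f\|_r$ and $\|f\|_q$ working out exactly to $a$ and $1-a$.

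The chief technical obstacle lies in the harmonic-mean endpoint. Unlike the Riemannian case, the Tanaka--Webster connection carries nonzero pseudohermitian torsion, the horizontal vector fields $Z_\alpha$ do not commute with one another, and the Reeb direction $T$ is recovered only through a bracket $T\sim[Z_\alpha,Z_{\bar\beta}]$. Consequently each integration by parts generates commutator terms of order strictly less than $m$ in which horizontal and transverse directions are entangled. Bounding these commutators via the Folland--Stein $S^r_k$-calculus and absorbing the lower-order contributions by an $\varepsilon$-Young argument---using the Sobolev endpoint from the previous step to control the tails---will require careful bookkeeping, after which the interpolation step is essentially formal.
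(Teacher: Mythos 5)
Your overall skeleton---a Green's-function/Sobolev endpoint, an integration-by-parts identity at the endpoint $a=j/m$, and H\"older interpolation to fill in the intermediate values of $a$---is the same as the paper's. The genuine gap is in your treatment of the harmonic-mean endpoint for general $j<m$: you propose to integrate $\int_M|\nabla^{|j|}f|^p\,dV_\theta$ by parts repeatedly against the Tanaka--Webster connection and to control the resulting torsion and commutator terms by ``careful bookkeeping'' and an $\varepsilon$-Young absorption. That is exactly the step you never carry out, and it is not routine: each commutator $[Z_\alpha,Z_{\bar\beta}]$ produces a $T$-derivative, which counts as a \emph{second}-order operator in the Folland--Stein scale, so the terms you dismiss as lower order are not obviously lower order in the relevant $S^r_k$ hierarchy, and the claimed absorption needs a proof. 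The paper sidesteps this entirely. It establishes the integration-by-parts identity only once, for a scalar function with $(j,m)=(1,2)$ and $a=1/2$ (Lemma~\ref{it3}): a single application of the divergence theorem to $\nabla^a\bigl(f\|\nabla f\|^{p-2}\nabla_a f\bigr)$ together with the pointwise bounds $|\triangle_b f|\le\sqrt{2n}\,\|\nabla^{|2|}f\|$ and $|\nabla_{ab}f\nabla^af\nabla^bf|\le\|\nabla^{|2|}f\|\cdot\|\nabla f\|^2$, so no commutators ever appear. The general $(j,m)$ is then reached by induction applied to the \emph{scalar} functions $\|\nabla^{|l|}f\|$, using the Kato-type inequality $\bigl\|\nabla\|\nabla^{|l|}f\|\bigr\|\le\|\nabla^{|l+1|}f\|$, which follows from metric compatibility of the connection and Cauchy--Schwarz. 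If you adopt that reduction, your endpoint-plus-interpolation plan closes and the torsion issue never arises; as written, the proposal leaves its hardest step unproved.

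A secondary point: for the Sobolev endpoint you want the full estimate $\|\nabla^{|j|}f\|_{p^*}\le C\|\nabla^{|m|}f\|_r$, which requires higher-order subelliptic estimates without the $\|f\|_r$ remainder (hence a normalization argument at every order). The paper only ever needs the first-order inequality $\|\varphi\|_p\le K\|\nabla\varphi\|_q$ for mean-zero $\varphi$ (Lemma~\ref{it2}), obtained from the Green's function representation and H\"older exactly as you describe; the higher-order cases are again absorbed into the induction on $\|\nabla^{|l|}f\|$. Using $a=1$ as an auxiliary endpoint is legitimate even though the theorem excludes it, but the scalar-reduction route makes it unnecessary.
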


We follow the idea of Aubin in \cite{A1}, we first prove the following lemma:
\begin{lemma}\label{it2}
Let $(M,\th)$ be a smooth, strictly pseudoconvex $2n+1$ dimensional compact CR manifold without boundary, and $p$, $q$ real numbers satisfying $\frac{1}{p}=\frac{1}{q}-\frac{1}{2n+2}$, $1\leq q< 2n+2$. Then there exists a constant $K$ depending only on $p$, $q$, $n$ and $(M,\th)$, for any function
$\f\in C^1(M)$ with $\tt \f \VV=0$, we have
$$\parallel\f \parallel_p\leq \parallel \nabla \f\parallel_q.$$
\end{lemma}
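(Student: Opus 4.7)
The plan is to follow the Green's function representation \eqref{G} and reduce the inequality to a Riesz-type potential bound on $(M,\th)$, in the spirit of Aubin's proof of the Sobolev inequality in the Riemannian setting. Because $\int_M \f\,\VV = 0$, identity \eqref{G} specializes to $\f(P) = \int_M G_P(x)\,\triangle_b \f(x)\,\VV$. Since the subelliptic gradient $\nabla_{\th} G_P$ has only an integrable singularity at $P$, integration by parts is legitimate and yields
$$\f(P) = -\int_M \langle \nabla_{\th} G_P(x), \nabla_{\th} \f(x)\rangle_{\th}\,\VV,$$
from which
$$|\f(P)| \leq \int_M |\nabla_{\th} G_P(x)|\,|\nabla_{\th} \f(x)|\,\VV.$$

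Next I would invoke the pointwise bound $|\nabla_{\th} G_P(x)| \leq C\,\rho(P,x)^{-(2n+1)}$, where $\rho$ denotes the Folland-Stein pseudo-distance on $M$. On the Heisenberg group $\HH^n$ this is classical, coming from Folland's explicit fundamental solution. On a general compact strictly pseudo-convex CR manifold the bound follows from the Folland-Stein parametrix construction, which represents $G_P$ near $P$ as the Heisenberg fundamental solution plus lower-order corrections; the smooth remainder and the contribution from $x$ away from $P$ are controlled by compactness of $M$.

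The exponent $2n+1$ equals $Q-1$, where $Q = 2n+2$ is the homogeneous dimension of the contact structure, so $\rho(P,x)^{-(2n+1)}$ is the kernel of a CR Riesz potential of order $1$. Standard Hardy-Littlewood-Sobolev theory on spaces of homogeneous type then gives the boundedness of
$$g \longmapsto \int_M \rho(P,x)^{-(2n+1)}\,g(x)\,\VV$$
from $L^q(M,\th)$ into $L^p(M,\th)$ whenever $1 \leq q < 2n+2$ and $\tfrac{1}{p} = \tfrac{1}{q} - \tfrac{1}{2n+2}$. Setting $g = |\nabla_{\th}\f|$ yields $\|\f\|_p \leq K\,\|\nabla_{\th}\f\|_q$, which is the asserted inequality.

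\emph{The main obstacle} is the sharp singular estimate $|\nabla_{\th}G_P(x)| \leq C\rho(P,x)^{-(2n+1)}$. Since $G_P$ is not explicit on a general CR manifold, one must unwind the Folland-Stein parametrix and carefully control the error terms so as to recover the exponent dictated by the Heisenberg fundamental solution. The need to handle the endpoint $q = 1$ is the reason for going through the Green's function at all: the alternative route via the CR Sobolev embedding in Proposition \ref{Theorem 2.1} combined with a Poincar\'e inequality fails at $q = 1$ because Proposition \ref{Theorem 2.1} requires $r > 1$. Once the gradient estimate on $G_P$ is secured, the remainder is routine Hardy-Littlewood-Sobolev theory on spaces of homogeneous type.
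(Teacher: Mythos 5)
Your first step coincides with the paper's: both use the Green's representation \eqref{G}, the vanishing of $\int_M \varphi\, dV_{\theta}$, and integration by parts to arrive at $|\varphi(P)|\le \int_M \|\nabla G_P\|\,\|\nabla\varphi\|\, dV_{\theta}$. After that the routes diverge, and yours is both heavier and, at the endpoint, broken. The paper never needs the sharp kernel bound $\|\nabla G_P(x)\|\le C\rho(P,x)^{-(2n+1)}$: it only uses the integrability statement $\sup_{P\in M}\int_M\|\nabla G_P\|\,dV_{\theta}<\infty$, applies H\"{o}lder's inequality with exponents $(q,\,q/(q-1))$ to the convolution-type integral to obtain the Poincar\'e-type bound $\|\varphi\|_q\le C\|\nabla\varphi\|_q$, and then invokes the Folland--Stein embedding $S_1^q\subset L^p$ to pass from $L^q$ to $L^p$. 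Your route instead pushes all the work into the Hardy--Littlewood--Sobolev theorem for the Riesz potential of order one, which requires the full pointwise decay of $\nabla G_P$ coming from the parametrix. That is a legitimate alternative for $1<q<2n+2$, but it is strictly more machinery than the paper uses.

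The genuine gap is at $q=1$, which you single out as the whole point of your approach. The fractional integration operator $g\mapsto\int_M\rho(P,x)^{-(Q-1)}g(x)\,dV_{\theta}$ with $Q=2n+2$ is \emph{not} bounded from $L^1$ into $L^{Q/(Q-1)}$ on a space of homogeneous type; at that endpoint one only obtains the weak-type $(1,\,Q/(Q-1))$ estimate. The strong $L^1\to L^{n/(n-1)}$ Sobolev inequality in the Euclidean setting is proved by other means (coarea formula, isoperimetry), not by Hardy--Littlewood--Sobolev, and the same obstruction appears here. So ``standard Hardy--Littlewood--Sobolev theory'' does not deliver the stated conclusion for $q=1$, and your criticism of the embedding route --- that Proposition \ref{Theorem 2.1} requires the integrability exponent to exceed $1$ --- applies with equal force to your own argument. (To be fair, the paper's proof has the same soft spot: its final appeal to the Folland--Stein embedding is likewise stated only for exponents greater than $1$; in the application in Section 3 only $q=2$ is ever used.) For $1<q<2n+2$ your argument is sound modulo the kernel estimate, which you correctly identify as the main technical input and which does follow from the Folland--Stein parametrix construction.
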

\begin{proof}
Since $\tt \f \VV=0$, by \eqref{G}, we have
$$\f(P)=\tt G_P(x)\ss \f (x)\VV,$$
from which we get
\begin{eqnarray*}
|\f(P)| &\leq &  \tt \parallel\nabla G_P\parallel \cdot \parallel \nabla \f\parallel \VV\\
 &=& \tt (\parallel\nabla G_P\parallel \cdot \parallel \nabla \f\parallel^q)^{\frac{1}{q}}\cdot \parallel\nabla G_P\parallel^{1-\frac{1}{q}} \VV\\
 &\leq & (\tt \parallel\nabla G_P\parallel \cdot \parallel \nabla \f\parallel^q\VV)^{\frac{1}{q}}\cdot (\tt \parallel\nabla G_P\parallel \VV)^{1-\frac{1}{q}}.
\end{eqnarray*}
Here we have used the H\"{o}lder inequality. Then we obtain
$$\parallel \f \parallel_q\leq \parallel \nabla \f \parallel_q \sup_{P\in M} \tt \parallel \nabla G_P\parallel \VV.$$
Then by Folland-Stein imbedding theorem, we obtain
$$\parallel \f\parallel_p \leq C(\parallel\nabla \f\parallel_q +\parallel \f \parallel_q)\leq K \parallel\nabla \f\parallel_q.$$
Here $K=C+C\cdot \sup_{P\in M} \tt \parallel \nabla G_P\parallel \VV.$
\end{proof}

Next, we prove the following Lemma, which is a generalized Poincar\'{e} type inequality.
\begin{lemma}\label{it3}
Let $(M,\th)$ be a smooth, strictly pseudoconvex $2n+1$ dimensional compact CR manifold without boundary, and $p$, $q$, $r$ real numbers satisfying
$1\leq q,r< \infty$, $p\geq 2$. Set $\frac{2}{p}=\frac{1}{q}+\frac{1}{r}$. Then for any functions $f\in C^{\infty}(M)$, we have:
$$\parallel \nabla f\parallel_p^2\leq (\sqrt{2n}+|p-2|)\parallel f\parallel_q\cdot \parallel \nabla^{|2|}f\parallel_r.$$
\end{lemma}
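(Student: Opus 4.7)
The plan is to imitate Aubin's argument (\cite{A1}, Theorem 3.69) for the Riemannian Gagliardo--Nirenberg inequality, adapted to the sub-Riemannian setting with the sub-Laplacian replacing the Laplace--Beltrami operator, and with the horizontal Hessian playing the role of the full Hessian. The key identity will come from integration by parts applied to the horizontal vector field $Y:=|\nabla f|^{p-2}\nabla f$.

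First I would rewrite
\begin{equation*}
\int_M |\nabla f|^p\,dV_\theta=\int_M \langle \nabla f,\,|\nabla f|^{p-2}\nabla f\rangle_\theta\,dV_\theta
\end{equation*}
and integrate by parts. Since $M$ is compact without boundary and $dV_\theta$ is preserved by the divergence theorem, one has $\int_M \operatorname{div}_\theta(fY)\,dV_\theta=0$ for the horizontal vector field $Y$; this yields
\begin{equation*}
\int_M |\nabla f|^p\,dV_\theta=-\int_M f\cdot\operatorname{div}_\theta\!\bigl(|\nabla f|^{p-2}\nabla f\bigr)\,dV_\theta.
\end{equation*}
Expanding the divergence in the Tanaka--Webster connection gives
\begin{equation*}
\operatorname{div}_\theta\!\bigl(|\nabla f|^{p-2}\nabla f\bigr)=|\nabla f|^{p-2}\triangle_b f+(p-2)|\nabla f|^{p-4}\,\nabla^a f\,\nabla^b f\,\nabla_a\nabla_b f.
\end{equation*}

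Next I would control the two terms pointwise. Cauchy--Schwarz on the horizontal Hessian contracted against $\nabla f\otimes \nabla f$ gives
\begin{equation*}
\bigl|\nabla^a f\,\nabla^b f\,\nabla_a\nabla_b f\bigr|\le |\nabla f|^2\,\|\nabla^{|2|}f\|,
\end{equation*}
while the fact that $\triangle_b f=\nabla^a\nabla_a f$ is a trace of $2n$ entries of $\nabla^{|2|}f$ (horizontal indices only) gives
\begin{equation*}
|\triangle_b f|\le \sqrt{2n}\,\|\nabla^{|2|}f\|.
\end{equation*}
Combining these,
\begin{equation*}
\int_M |\nabla f|^p\,dV_\theta\le \bigl(\sqrt{2n}+|p-2|\bigr)\int_M |f|\cdot |\nabla f|^{p-2}\cdot \|\nabla^{|2|}f\|\,dV_\theta.
\end{equation*}

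Finally I would apply H\"older's inequality with the three exponents $q$, $\frac{p}{p-2}$ and $r$. The condition $\frac{2}{p}=\frac{1}{q}+\frac{1}{r}$ is exactly what forces $\frac{1}{q}+\frac{p-2}{p}+\frac{1}{r}=1$, so this is legitimate and yields
\begin{equation*}
\|\nabla f\|_p^p\le \bigl(\sqrt{2n}+|p-2|\bigr)\,\|f\|_q\,\|\nabla f\|_p^{p-2}\,\|\nabla^{|2|}f\|_r,
\end{equation*}
and dividing by $\|\nabla f\|_p^{p-2}$ (assuming it is nonzero, else both sides vanish) gives the stated inequality. The main subtlety I expect is a clean justification of the divergence-theorem step in the Tanaka--Webster connection: one must check that for a horizontal vector field $Y$, $\int_M \operatorname{div}_\theta(Y)\,dV_\theta=0$, which follows from Stokes applied to $\iota_Y\,dV_\theta$ together with the standard identification of $\operatorname{div}_\theta(Y)$ with $\nabla_a Y^a$ (summed over the horizontal indices $a=1,\dots,2n$); the constant $\sqrt{2n}$ appears precisely because the trace defining $\triangle_b$ runs only over these $2n$ horizontal directions.
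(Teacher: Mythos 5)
Your proposal is correct and follows essentially the same route as the paper: both integrate the identity $\nabla^a\bigl(f\,\|\nabla f\|^{p-2}\nabla_a f\bigr)=\|\nabla f\|^p+f\|\nabla f\|^{p-2}\triangle_b f+(p-2)f\|\nabla f\|^{p-4}\nabla_{ab}f\,\nabla^a f\,\nabla^b f$ over $M$, use the pointwise bounds $|\triangle_b f|\le\sqrt{2n}\,\|\nabla^{|2|}f\|$ and $|\nabla_{ab}f\,\nabla^a f\,\nabla^b f|\le\|\nabla f\|^2\,\|\nabla^{|2|}f\|$, and then apply H\"older with exponents $q$, $p/(p-2)$, $r$ before dividing by $\|\nabla f\|_p^{p-2}$. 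The only cosmetic difference is that you phrase the first step as the divergence theorem for the horizontal field $f\,\|\nabla f\|^{p-2}\nabla f$, while the paper writes out the same product-rule identity directly.
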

\begin{proof}
By a direct computation, we have
\begin{eqnarray*}
\nabla^a(f\parallel \nabla f\parallel^{p-2}\nabla _a f) &= &  \parallel \nabla f\parallel^p+f\parallel \nabla f\parallel^{p-2}\nabla ^a\nabla_a f\\
 &+& (p-2)\parallel \nabla f\parallel^{p-4}f\nabla_{ab}f\nabla^a f\nabla^b f.
\end{eqnarray*}
Especially, if $p=2$, we have $\parallel \nabla f\parallel_2^2=-\tt f\ss f\VV$. Then Lemma \ref{it3} is just the Poincar\'{e} type inequality we proved above. If $p>2$, we have
$$\parallel \nabla f\parallel_p^p=-\tt f\ss f \parallel \nabla f\parallel^{p-2}+(2-p)\tt \parallel \nabla f\parallel^{p-4}f\nabla_{ab}f\nabla ^a f \nabla^b f \VV.$$
Since $|\ss f|^2\leq 2n \parallel \nabla^{|2|}f\parallel^2$ and $|\nabla_{ab}f \nabla^a \nabla^b f|\leq \parallel \nabla^{|2|}f \parallel \cdot\parallel \nabla f\parallel^2$, we choose $r$ such that $\frac{1}{q}+\frac{1}{r}+\frac{p-2}{p}=1$. By H\"{o}lder inequality, we have
$$\parallel \nabla f\parallel_p^p\leq (\sqrt{2n}+|p-2|)\parallel f\parallel_q\cdot \parallel \nabla^{|2|}f\parallel_r \cdot \parallel \nabla f\parallel_p^{p-2},$$
and the desired result follows.
\end{proof}

Now we prove Theorem \ref{it1}. First we note if the two cases $j=0$, $m=1$ and $j=1$, $m=2$ are proved, the general case will be followed by induction by applying the inequality
$$\parallel \nabla \parallel\nabla ^{|l|} f\parallel\parallel\leq \parallel \nabla ^{|l+1|} f\parallel,$$
which follows from the fact that the Tanaka-Webster connection is compatible with the inner product $\langle \cdot,\cdot \rangle_{\th}$ and Cauchy-Schwarz inequality. From Lemma \ref{it2}, we have
$$\parallel f\parallel_s\leq C \parallel \nabla f\parallel_t,$$
where $\frac{1}{s}=\frac{1}{t}-\frac{1}{2n+2}>0.$

For the case $j=0$, $m=1$. By H\"{o}lder inequality, we have
$$\parallel f\parallel_p\leq \parallel f\parallel_s^a\parallel f\parallel_q^{1-a}.$$
Here $\frac{1}{p}=\frac{a}{s}+\frac{1-a}{q}$, i.e. $\frac{1}{p}-\frac{1}{q}=a(\frac{1}{s}-\frac{1}{q})$. Then we choose $t=r<2n+2$, from which we get
$$\parallel f\parallel_p\leq C \parallel \nabla f\parallel_r^a \parallel f\parallel_q^{1-a},$$
which means $\frac{1}{p}=a(\frac{1}{r}-\frac{1}{2n+2})+(1-a)\frac{1}{q}$.

If $r\geq 2n+2$, we choose $\mu$ such that $\frac{1}{ap}=\frac{1}{\mu}-\frac{1}{2n+2}$. Let $h=|f|^{\frac{1}{a}}$, we have
$$\parallel h\parallel_{ap}\leq C \parallel \nabla h\parallel_{\mu},$$
again by H\"{o}lder inequality, we have
$$\parallel f\parallel_p^{\frac{1}{a}}\leq \frac{C}{a}\parallel \|\nabla f \| \cdot |f|^{\frac{1}{a}-1}\parallel_{\mu}\leq \frac{C}{a} \parallel \nabla f\parallel_r \cdot \parallel f\parallel_q ^{\frac{1}{a}-1},$$
the desired consequence follows.

For the case $j=1$, $m=2$. If $a=\frac{j}{m}=\frac{1}{2}$, Theorem \ref{it1} is just Lemma \ref{it3}. Then for $r\geq 2n+2$, and $\frac{1}{2}<a<1$, the interpolation inequality follows from H\"{o}lder inequality. If $r\geq 2n+2$, by induction, we apply the first case to $\parallel \nabla f\parallel$ and get
$$\parallel \nabla f\parallel_p\leq C \parallel \nabla^{|2|}f\parallel_r^b \parallel \nabla f\parallel_s^{1-b},$$
where $\frac{1}{p}=\frac{1}{s}+b(\frac{1}{r}-\frac{1}{2n+2}-\frac{1}{s})>0$, $\frac{2}{s}=\frac{1}{r}+\frac{1}{q}$, and $a=\frac{1+b}{2}$. i.e.
$$\frac{1}{p}=\frac{1}{2n+2}+a(\frac{1}{r}-\frac{2}{2n+2})+(1-a)\frac{1}{q},$$
and the proof is completed.



\begin{thebibliography}{99}

\bibitem{A}
T. Aubin,
\'{E}quations diff\'{e}rentielles non lin\'{e}aires et probl\`{e}me de Yamabe concernant
la courbure scalaire,
J. Math. Pures Appl. (9) 55 (1976) 269-296.

\bibitem{A1}
T. Aubin,,
Some nonlinear problems in riemannian geometry. Springer Monogr.Math., (1998).



\bibitem{B05}
S. Brendle,
Convergence of the Yamabe flow for arbitrary initial energy,
J. Differential Geom., 69 (2005) 217-278.

\bibitem{B07}
S. Brendle,
A short proof for the convergence of the Yamabe flow on $S^n$,
Pure Appl. Math. Q., 3 (2007) 499-512.

\bibitem{B07-Invent}
S. Brendle, Convergence of the Yamabe flow in dimension 6 and higher,
 Invent. Math., 170 (2007) 541-576.

\bibitem{CC}
S.C. Chang and J.H. Cheng,
The Harnack estimate for the Yamabe flow on CR
manifolds of dimension 3,
Ann. Global Anal. Geom. 21 (2002) 111-121.

\bibitem{CMY}
J. H. Cheng, A. Malchiodi and P. Yang,
A positive mass theorem in three dimensional Cauchy-Riemann geometry. \textit{Adv. Math.}  \textbf{308}  (2017),
276--347.
\bibitem{CM}
S.S. Chern and J.K. Moser,
Real hypersurfaces in complex manifolds.
Ann. of Math. (2) 133 (1974) 219-271.

\bibitem{Chow}
B. Chow,
The Yamabe flow on locally conformally flat manifolds with positive Ricci curvature,
Comm. Pure Appl. Math., 45 (1992) 1003-1014.

\bibitem{Feff}
C. Fefferman,
 Monge-Amp\`{e}re equations, the Bergman kernel, and geometry of pseudoconvex domains,
 Ann. of Math. (2) 103 (1976), 395-416; correction, 104 (1976), 393-394.

\bibitem{F}
G. Folland,
The tangential Cauchy-Riemann complex on spheres,
Trans. Amer. Math. Soc. 171 (1972) 83-133.

\bibitem{FS}
G. Folland and E. Stein,
Estimates for the $\bar{\partial}_b$ complex and analysis on the Heisenberg group,
Comm. Pure Appl. Math. 27 (1974) 429-522.

\bibitem{Ga}
N. Gamara,
The CR Yamabe conjecture -- The case $n=1$,
J. Eur. Math. Soc. 3 (2001) 105-137.

\bibitem{GY}
N. Gamara and R. Yacoub,
CR Yamabe conjecture -- The conformally flat case,
Pacific J. Math. 201 (2001) 121-175.

\bibitem{Ho12}
P.T. Ho,
The long time existence and convergence of the CR Yamabe flow,
Commun. Contemp. Math. 14 (2012) 50 p.

\bibitem{Ho14}
P.T. Ho,
The webster scalar curvature flow on CR sphere. Part I,
Advances in Mathematics. (2014) 268:758-835.

\bibitem{HSW}
P. T. Ho, W. M. Sheng and K. B. Wang, Convergence of the CR Yamabe Flow, Math. Ann. (online), doi.org/10.1007/s00208-017-1634-z.


\bibitem{H}
R.S. Hamilton,
Lectures on geometric flows (1989), unpublished.

\bibitem{JL-JDG}
D. Jerison and J. M. Lee,
The Yamabe problem on CR manifolds,
J. Differential Geom. 25 (1987) 167-197.

\bibitem{JL-JAMS}
D. Jerison and J.M. Lee,
Extremals for the Sobolev inequality on the Heisenberg group and the CR Yamabe problem,
J. Amer. Math. Soc. 1 (1988) 1-13.

\bibitem{JL89}
D. Jerison and J.M. Lee,
Intrinsic CR normal coordinates and the CR Yamabe problem, J. Differential Geom. 29 (1989) 303-343.

\bibitem{KN}
J.J.Kohn and L. Nirenberg,
Degenerate elliptic-parabolic equations of second order, Comm. Pure Appl. Math. 20 (1967), 797-871.
\bibitem{LP}
J. M. Lee and T. H. Parker,
The Yamabe problem,
Bull. Amer. Math. Soc. (N.S.) 17 (1987) 37-91.

\bibitem{N}
L.Nirenberg, On elliptic partial differential equations, Ann. Scuola Norm. Sup. Pisa. 3 (1959). 13: 115-162.


\bibitem{S}
R. Schoen,
Conformal deformation of a Riemannian metric to constant scalar curvature,
J. Differential Geom. 20 (1984) 479-495.

\bibitem{SS}
H. Schwetlick and M. Struwe,
Convergence of the Yamabe flow for "large"  energies,
J. Reine Angew. Math. 562 (2003) 59-100.
\bibitem{Struwe}
M. Struwe,
A global compactness result for elliptic boundary value problems involving
limiting nonlinearities,
Math. Z. 187 (1984) 511-517.


\bibitem{T}
N. Tanaka,
A differential geometric study on strongly pseudo-convex manifolds, Kinokuniya, (1975) Tokyo.

\bibitem{Tr}
N.S. Trudinger,
Remarks concerning the conformal deformation of Riemannian structures
on compact manifolds,
Ann. Scuola Norm. Sup. Pisa (3) 22 (1968) 265-274.

\bibitem{W}
S.M. Webster,
Pseudohermitian structures on a real hypersurface,
J. Differential Geom. 13 (1978), 25-41.

\bibitem{Ya}
H. Yamabe,
On a deformation of Riemannian structures on compact manifolds,
Osaka Math. J. 12 (1960) 21-37.

\bibitem{Ye}
R. Ye,
Global existence and convergence of Yamabe flow,
J. Differential Geom. 39(1994) 35-50.

\bibitem{Z}
Y. Zhang, The contact Yamabe flow,
Ph.D. thesis, University of Hanover (2006).

\end{thebibliography}
\end{document}